\newcommand{\zs}{{\mathbb Z}} 
\newcommand{\qs}{{\mathbb Q}}  
\newcommand{\cs}{{\mathbb C}} 
\newcommand{\eps}{\varepsilon}
\newcommand{\mb}{\bar m}
\newcommand{\Sn}{{\mathfrak S}}
\newcommand{\GA}{\mathbb{A}}
\def\Pl{P_{\ell}}
\def\Pr{P_r}
\def\Ql{Q_{\ell}}
\def\Qr{Q_r}
\DeclareMathOperator{\DR}{DR}
\DeclareMathOperator{\inv}{inv}
\newcommand{\cT}{\mathcal T}
\renewcommand{\chi}{\mathbbm 1}
\newcommand{\figeps}[3]
{\begin{figure}[ht!]
\begin{center} 
\includegraphics[width=#1cm]{#2.eps}\caption{#3}\label{fig:#2} 
\end{center}
\end{figure}}
\newtheorem{Theorem}{Theorem}
\newtheorem{Proposition}[Theorem]{Proposition}
\newtheorem{Corollary}[Theorem]{Corollary}
\newtheorem{Lemma}[Theorem]{Lemma}
\newtheorem{Property}[Theorem]{Property}
\newtheorem{Definition}[Theorem]{Definition}
\newcommand{\beq}{\begin{equation}}
\newcommand{\eeq}{\end{equation}}
\newcommand{\gf}{generating function}
\newcommand{\fps}{formal power series}
\def\emm#1,{{\em #1}}
\def\section{\@startsection{section}{1}%
 \z@{.7\linespacing\@plus\linespacing}{.5\linespacing}%
 {\normalfont\bfseries\scshape\centering}}
\def\subsection{\@startsection{subsection}{2}%
  \z@{.5\linespacing\@plus\linespacing}{.5\linespacing}%
  {\normalfont\bfseries\scshape}}
\def\subsubsection{\@startsection{subsubsection}{3}%
 \z@{.5\linespacing\@plus\linespacing}{-.5em}
  {\normalfont\bfseries\itshape}}
\def\qed{$\hfill{\vrule height 3pt width 5pt depth 2pt}$}
\def\qee{$\hfill{\Box}$}
\def\cT{\mathcal{T}}
\def\cTn{\cT_n}
\def\cTnm{\cT_n^{(m)}}
\newcommand{\spacebreak}
{\begin{displaymath} \triangleleft \; \lhd \;
\diamond \; \rhd \; \triangleright
  \end{displaymath}}
\begin{document}
\title
[The number of intervals in the $m$-Tamari lattices]
{The number of intervals in the $m$-Tamari lattices}

\author[M. Bousquet-M\'elou]{Mireille Bousquet-M\'elou}
\author[\'E. Fusy]{\'Eric Fusy}
\author[L.-F. Préville-Ratelle]{Louis-François Préville-Ratelle}

\address{MBM: CNRS, LaBRI, Universit\'e Bordeaux 1, 
351 cours de la Lib\'eration, 33405 Talence, France}
\email{mireille.bousquet@labri.fr}
\address{\'EF: CNRS, LIX, \'Ecole Polytechnique, 91128 Palaiseau Cedex,
France}
\email{fusy@lix.polytechnique.fr}
\address{LFPR: LACIM, UQAM, C.P. 8888 Succ. Centre-Ville, Montréal H3C 3P8, Canada}
\email{preville-ratelle.louis-francois@courrier.uqam.ca}
%

%
\thanks{LFPR is partially supported by
 a
CRSNG grant.
\'EF and LFPR are supported by the European project
ExploreMaps -- ERC StG 208471}

\keywords{Enumeration --- Lattice paths --- Tamari lattices}
\subjclass[2000]{05A15}

\dedicatory{\large{To Doron Zeilberger, on the occasion of his 60th birthday}}

\begin{abstract}
An {$m$-ballot path} of size $n$ is a path  on the square grid
consisting of north and east steps, starting at
$(0,0)$,  ending at $(mn,n)$, and never going below the line
$\{x=my\}$. The set of these paths can be equipped with a lattice structure,
called the $m$-Tamari lattice and denoted by $\cTn^{(m)}$, which generalizes the usual Tamari
lattice $\cTn$ obtained when $m=1$. We prove that the number of intervals in
this lattice is 
$$
\frac {m+1}{n(mn+1)} {(m+1)^2 n+m\choose n-1}.
$$
This formula was recently conjectured by Bergeron  in
connection with the
study of diagonal
coinvariant spaces. The case $m=1$ was proved a few years ago
by Chapoton. Our proof is based on a recursive description of
intervals, which translates into a functional equation satisfied by
the associated \gf. The solution
of this equation is an algebraic series, obtained by a guess-and-check
approach. 
Finding a bijective proof remains an open problem.
\end{abstract}

\date{\today}
\maketitle


\section{Introduction}
A \emph{ballot path} of size   $n$ 
is a path on the square lattice, consisting of north and east steps, starting at
$(0,0)$,       
ending at $(n,n)$, and never going below the diagonal $\{x=y\}$. There
are three standard ways, often named after Stanley, Kreweras and Tamari, 
to endow the set of ballot paths of size $n$ with a lattice
structure (see~\cite{friedman-tamari,HT72,kreweras}, and~\cite{BeBo07}
or~\cite{knuth4} for a
survey). We focus here on the \emph{Tamari
  lattice} $\cTn$, which, as detailed in the following proposition,  is
conveniently  described by the associated covering relation. See
Figure~\ref{fig:push_Walk} for an illustration.
\figeps{14}{push_Walk}{A covering relation in the Tamari
  lattice, shown on ballot paths and binary trees. The path encodes
  the postorder of the tree 
(apart from the first leaf).}

\begin{Proposition}{\bf\cite[Prop.~2.1]{BeBo07}}
 Let $P$ and $Q$ be two ballot paths of size $n$. Then $Q$ covers
 $P$ in the Tamari lattice $\cTn$ if and only if there exists in  $P$
 an east step $a$, followed by a north step $b$, such that $Q$ is obtained from $P$ by swapping $a$ and $S$,
 where $S$ is the shortest factor of $P$ that begins with $b$ and is
 a (translated) ballot path.
\end{Proposition}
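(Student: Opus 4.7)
The plan is to pass through the classical bijection $\phi$ between ballot paths of size $n$ and binary trees with $n$ internal nodes, read off from the postorder of the tree with the first leaf discarded, as indicated in Figure~\ref{fig:push_Walk}. Under $\phi$, leaves become north steps, internal nodes become east steps, and the Tamari lattice on ballot paths is transported to the usual Tamari lattice on binary trees, whose covering relation is the \emph{right rotation} at an internal edge. The task thus reduces to translating a right rotation via $\phi$ into the swap described in the statement.

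First I would establish a structural dictionary. The postorder of a subtree with $k$ internal nodes contributes $k+1$ north steps and $k$ east steps, hence net height $+1$; a short induction should show that all intermediate heights are strictly positive except at the very start. Consequently, a factor of $P$ starting at a north step will be a translated ballot path if and only if it has the form $\operatorname{post}(T')\cdot w$, where $T'$ is the right subtree of some internal node $w$ and the trailing $w$ (an east step) brings the height back to zero. In particular, the \emph{shortest} such factor beginning at a given north step $b$ is uniquely determined.

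Next I would carry out the local verification. If an east step $a$ is followed by a north step $b$ in $P$, then the internal node $v$ corresponding to $a$ is forced to be a left child (otherwise its successor in postorder would be its parent, an internal node, hence another east step); writing $w$ for the parent of $v$ and $T'$ for the right subtree of $w$, one has that $b$ is the first token of $\operatorname{post}(T')$, and the dictionary then identifies the shortest ballot-path factor starting at $b$ as $S=\operatorname{post}(T')\cdot w$. Comparing the postorder before and after a right rotation at the edge $wv$,
\begin{align*}
\text{before:}\quad &\cdots\,\operatorname{post}(L)\,\operatorname{post}(R)\,v\,\operatorname{post}(T')\,w\,\cdots\\
\text{after:}\quad &\cdots\,\operatorname{post}(L)\,\operatorname{post}(R)\,\operatorname{post}(T')\,w\,v\,\cdots
\end{align*}
where $L,R$ are the two subtrees of $v$, one sees that the effect is exactly to swap $a=v$ and $S$.

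The converse is immediate: every right rotation in a binary tree produces, on the postorder side, a swap of precisely this form, and right rotations are well known to generate all the Tamari covers on binary trees. The hard part will be the structural dictionary in the first step; once one recognises that ``shortest ballot-path factor starting at a north step'' singles out a right subtree together with its parent node, the rest of the argument is a direct translation.
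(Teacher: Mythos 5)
Your proof is correct, and it follows exactly the route the paper itself indicates: the paper states this proposition as a citation of \cite[Prop.~2.1]{BeBo07} without reproving it, but the surrounding text explains that ``the equivalence between the two descriptions is obtained by reading the tree in postorder, and encoding each leaf (resp.\ inner node) by a north (resp.\ east) step,'' and defers the details to \cite[Sec.~2]{BeBo07}. Your postorder dictionary (leaf $\leftrightarrow$ north, internal node $\leftrightarrow$ east, shortest ballot-path factor starting at $b$ equals $\operatorname{post}(T')\cdot w$) and the before/after comparison under rotation is precisely that argument spelled out.
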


Alternatively, the Tamari lattice  $\cTn$ is often described in terms of
rooted binary trees. The covering relation amounts to  a re-organization of three
subtrees, often called \emm rotation, (Figure~\ref{fig:push_Walk}). The equivalence between the two
descriptions is obtained by reading the tree in postorder, and encoding
each leaf (resp. inner node) by a north (resp. east) step (apart from
the first leaf, which is not encoded). We refer
to~\cite[Sec.~2]{BeBo07} for details. The Hasse diagram of the
lattice $\cTn$ is the 1-skeleton of the \emm associahedron,, or \emm Stasheff
polytope,~\cite{casselman}.

A few years ago, Chapoton~\cite{ch06}  proved that the number of \emm
intervals, in 
$\cT_n$ (\emm i.e.,, pairs $P,Q\in\cTn$ such that $P\leq Q$) is
$$
\frac 2{n(n+1)} {4n+1 \choose n-1}.
$$
He observed that this number is known to count 3-connected
planar triangulations on $n+3$ vertices~\cite{tutte-triang}. Motivated by this result, Bernardi and
Bonichon found a beautiful  bijection between Tamari intervals and
triangulations~\cite{BeBo07}.
This bijection is in fact a restriction
of a more general bijection between intervals in the
Stanley lattice and \emm Schnyder woods,. A further restriction leads to the enumeration of
intervals of the Kreweras lattice.

{\begin{figure}[b!]
\begin{center} 
\includegraphics[width=12cm]{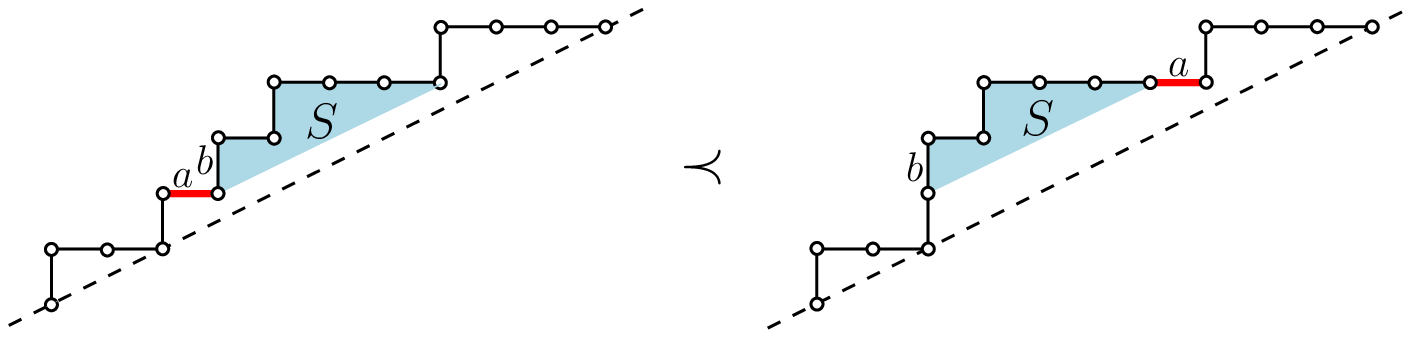}\caption{The relation $\prec$  between
  $m$-ballot paths ($m=2$).}\label{fig:push_mWalk} 
\end{center}
\end{figure}}

\medskip
In this paper, we study a generalization of the Tamari lattices to
\emm $m$-ballot paths, due to Bergeron, and count the intervals of these
lattices. Again, a remarkably simple formula holds
(see~\eqref{number}). As we explain 
below, this formula was first conjectured by F.~Bergeron,
in connection with the study of coinvariant spaces.

 An \emph{$m$-ballot path} of size $n$ is a path  on the square grid
consisting of north and east steps, starting at
$(0,0)$,  ending at $(mn,n)$, and never going below the line
$\{x=my\}$. It is a classical  
exercice to show that  there are $\frac 1{mn+1}{(m+1)n \choose n}$
such paths~\cite{dvoretzky}.  
Consider the following relation $\prec$ on $m$-ballot paths, illustrated in
Figure~\ref{fig:push_mWalk}.
\begin{Definition}
\label{def-m-tamari}
Let $P$ and $Q$ be two $m$-ballot paths of size $n$.
 Then $P \prec Q$ if  there exists in  $P$
 an east step $a$, followed by a north step $b$, such that $Q$ is
 obtained from $P$ by swapping $a$ and $S$, 
 where $S$ is the shortest factor of $P$ that begins with $b$ and is
 a (translated) $m$-ballot path.
\end{Definition}
As we shall see, the transitive closure of $\prec$ defines a lattice
on $m$-ballot paths of size $n$. We call it the 
$m$-Tamari lattice of size $n$,
and denote it by $\cTnm$. Of course, $\cTn^{(1)}$ coincides with
$\cTn$. See Figure~\ref{fig:lattice_ex} for examples.
The main result of this paper is a closed form
expression for the number $f_n^{(m)}$ of intervals  in $\cT_{n}^{(m)}$: 
\beq\label{number}
f_n^{(m)}=\frac {m+1}{n(mn+1)} {(m+1)^2 n+m\choose n-1}.
\eeq

\begin{figure}
\begin{center}
\includegraphics[height=9cm]{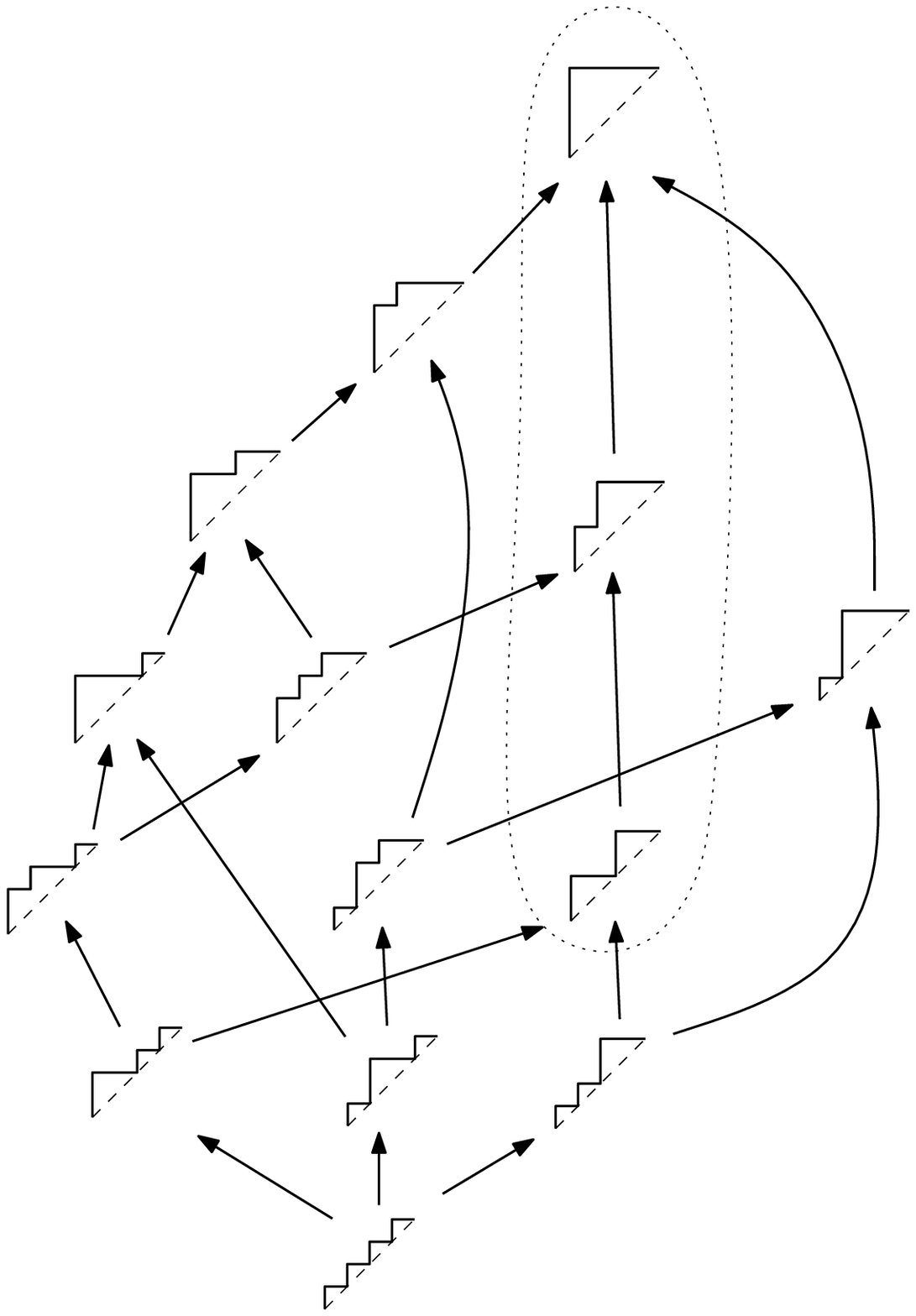}\hspace{1cm}\includegraphics[height=9cm]{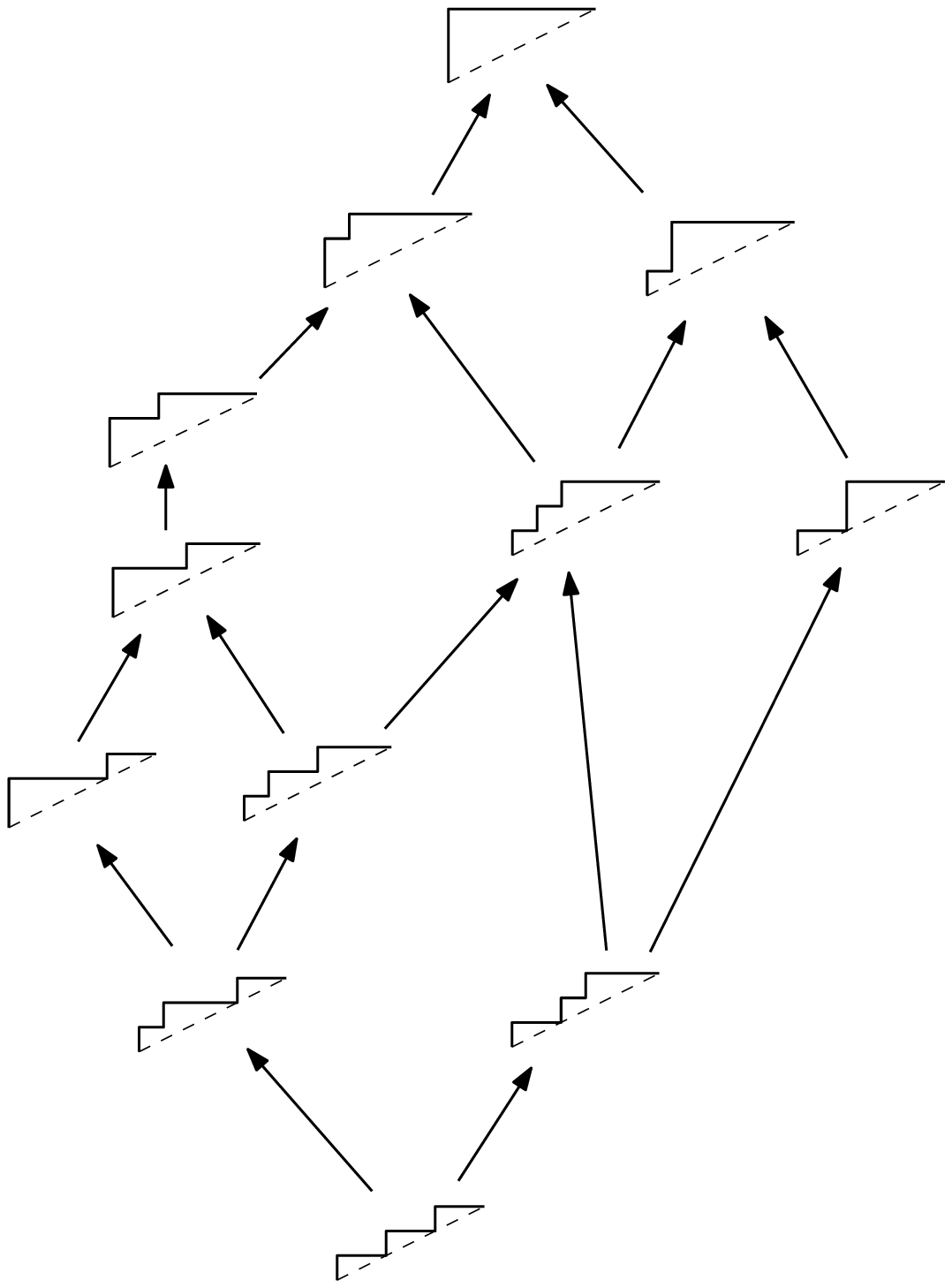}
\end{center}
\caption{The $m$-Tamari lattice $\cT_{n}^{(m)}$ for  $m=1$ and $n=4$ (left)
and for $m=2$ and $n=3$ (right). 
The three walks surrounded by a line in $\cT_{4}^{(1)}$ form a lattice
that is isomorphic to $\cT_{2}^{(2)}$. This will be generalized  in Section~\ref{sec:eq}.}
\label{fig:lattice_ex}
\end{figure}

The first step of  our proof establishes that  $\cTnm$ is in fact
isomorphic to a
sublattice (and more precisely, an upper ideal) of $\cT_{mn}$. 
We then proceed with a recursive description of the intervals of
$\cTnm$, which 
translates into a functional equation for the associated \gf\
(Section~\ref{sec:eq}, Proposition~\ref{prop:eq}). This
\gf\ 
keeps track of the size of the paths, but also of  a \emm
catalytic parameter,\footnote{This terminology is due to
  Zeilberger~\cite{zeil-umbral}.} that is needed to write the
equation. This parameter is the number of contacts of the lower path
with the line $\{x=my\}$. A
general theorem asserts that the solution of the equation is
algebraic~\cite{mbm-jehanne}, and gives a systematic procedure to solve it for
small
values of $m$. However, for a generic value of $m$, we have to resort
to a guess-and-check approach to solve the equation
(Section~\ref{sec:sol}, Theorem~\ref{thm:series}). 
We enrich our enumeration by taking
into account the \emm initial rise, of the upper path, that is, the
length of its initial run of north steps. We obtain an unexpected
symmetry result: the joint distribution of the number of contacts of
the lower path (minus one) and the initial rise of the upper path is
symmetric. Section~\ref{sec:final} presents  comments and questions.

\medskip
To conclude this introduction, we  describe the algebraic
problem that led  Bergeron  to conjecture~\eqref{number}.

Let $X=(x_{i,j})_{^{1 \leq i \leq \ell}_{1 \leq j \leq n}}$ be a matrix
of variables, for some positive integers $\ell,n \geq 1$. We call each
line of $X$ a \emm set of variables,. 
Let $\mathbb{C}[X]$ be the ring of polynomials in the variables of
$X$. The symmetric group $\Sn_n$ acts as a representation
on $\mathbb{C}[X]$ 
 by permuting the columns of $X$. That is,  if $\sigma \in \Sn_n
$ and $f(X) \in \mathbb{C}[X]$, then
$$
\sigma(f(X))=f(\sigma(X))=f((x_{i,\sigma(j)})_{^{1 \leq i \leq \ell}_{1 \leq j \leq n}}).$$
We consider the ideal $I$ of $\cs[X]$ generated by $\Sn_n$-invariant
polynomials having no constant term.  The
 quotient ring $\cs[X]/I$ is (multi-)graded because 
$I$ is (multi-)homogeneous, and is a representation of $\Sn_n$ because $I$ is
invariant under the action of $\Sn_n$. We focus on the dimension of
this quotient ring, and to the dimension of the sign 
subrepresentation. We denote by $W^ \varepsilon$  the sign
subrepresentation of a representation $W$.

Let us begin with the classical case of a single set of variables. When
$X=[x_1,\ldots ,x_n]$, we consider the \emm coinvariant space, $R_n$,
defined by
\begin{equation*}
R_n = \mathbb{C}[X] \Big / \big\langle \big\{ \sum_{i=1}^n x_i^{r} \big| r \geq 1 \big\} \big\rangle ,
\end{equation*}
where $\big\langle S \big\rangle$ denotes the ideal generated by
the set $S$.
It is known~\cite{Artin} that $R_n$ is isomorphic to the regular
representation of $\Sn_n$. In particular, $\dim(R_n)=n!$ and
$\dim(R_n ^\varepsilon)=1$. 
There exist explicit bases of $R_n$ indexed by permutations.

Let us now move to two sets of variables. In the early nineties, Garsia and
Haiman introduced an analogue of $R_n$  for $X 
= \begin{bmatrix} x_1&\ldots&x_n\\ y_1&\ldots&y_n \end{bmatrix}$, and 
called it the \emm diagonal coinvariant space,~\cite{HaiConj}: 
\begin{equation*}
\DR_{2,n} = \mathbb{C}[X] \Big / \big\langle \big\{ \sum_{i=1}^n x_i^{r} y_i^{t} \big| r+t \geq 1 \big\} \big\rangle .
\end{equation*}
About ten years later, using advanced algebraic geometry~\cite{HaimanPreu},
Haiman settled several conjectures of~\cite{HaiConj} concerning this
space,  proving in particular that   
\beq\label{2sets}
\dim({\DR_{2,n}} ) = (n+1)^{n-1} \quad \hbox{ and } \quad
\dim(\DR_{2,n}^{\ \varepsilon} ) =\frac{1}{n+1} \binom{2n}{n} .
\eeq
He  also studied  an extension of $\DR_{2,n}$ involving an integer
parameter $m$ and the ideal $\mathcal A $ generated by \emm
alternants,
\cite{garsia-haiman-lagrange,MR2115257}:
$$
\mathcal A = \big\langle \big\{ f(x) \big| \sigma(f(X))= (-1)^{\inv(\sigma)}f(X) , \forall \sigma \in \Sn_n \big\} \big\rangle .
$$
There is a natural action of $\Sn_n$ on the quotient space
$\mathcal{A}^{m-1} \big{/} \mathcal{J} \mathcal{A}^{m-1}$. Let us
twist this action by the $(m-1)^{\rm st}$ power of the sign
representation $\eps$: this gives rise to spaces
$$
\mathcal{DR}_{2,n}^{m} := {\eps}^{m-1}  \otimes  \mathcal{A}^{m-1} \big{/} \mathcal{J} \mathcal{A}^{m-1},
$$
so that $\mathcal{DR}_{2,n}^{1}=\mathcal{DR}_{2,n}$.
Haiman~\cite{HaimanPreu,MR2115257} generalized~\eqref{2sets} by   proving
$$
\dim({\DR_{2,n}^m}) = (mn+1)^{n-1} \quad \hbox{and } \quad 
\dim(\DR_{2,n}^{ m\ \varepsilon} ) = \frac{1}{mn+1}
\binom{(m+1)n}{n}.
$$
Both dimensions have simple combinatorial interpretations: we
recognize in the latter
 the number of $m$-ballot paths of size $n$, and the former  is
the number of \emm $m$-parking functions, of size $n$ (these functions
can be described as $m$-ballot paths of size $n$ in which the north
steps are labelled from $1$ to $n$ in such a way the labels increase
along each run of north steps; see \emm e.g.,~\cite{yan}). However, it
is still an open problem to find bases of ${\DR_{2,n}^m}$ or
$\DR_{2,n}^{ m\ \varepsilon}$ indexed by these simple combinatorial
objects.

For  $\ell \geq 3$, the spaces $\DR_{\ell,n}$ and their generalization
${\DR_{\ell,n}^m}$ can be defined similarly. Haiman 
 explored  the dimension of $\DR_{\ell,n}$  and 
$\DR_{\ell,n}^{\ \varepsilon}$. For $\ell=3$, 
he observed in~\cite{HaiConj} that, for small values of $n$,
\begin{equation*}
\dim({\DR_{3,n}}) = 2^{n} (n+1)^{n-2} \quad \hbox{and } \quad 
\dim(\DR_{3,n}^{\ \varepsilon} ) = \frac 2{n(n+1)} {4n+1 \choose n-1}.
\end{equation*}
Following discussions with Haiman, Bergeron came up with conjectures
that directly imply the following generalization (since $\DR_{3,n}^1$
coincides with $\DR_{3,n}$):
\begin{equation*}
\dim({\DR_{3,n}^m}) = (m+1)^{n} (mn+1)^{n-2} \quad \hbox{and } \quad 
\dim(\DR_{3,n}^{m \ \varepsilon} ) = \frac{m+1}{n(mn+1)}
\binom{(m+1)^2 n + m}{n-1}. 
\end{equation*}
Both conjectures are still wide open. 

A much simpler problem consists
in asking whether these dimensions again have a simple combinatorial
interpretation. 
Bergeron, starting from the sequence $\frac 2{n(n+1)} {4n+1 \choose n-1}$, found in Sloane's Encyclopedia
that this number counts, among others, certain ballot related objects,
namely  intervals in the Tamari lattice~\cite{ch06}. 
From this observation, 
and the role played by $m$-ballot paths for two sets of variables,
he was led to introduce  the $m$-Tamari lattice $\cT_{n}^{(m)}$, and conjectured
that $\frac{m+1}{n(mn+1)}
\binom{(m+1)^2 n + m}{n-1}$ is the number of intervals in
this lattice. This is the conjecture we prove in this paper. Another
of his conjectures is that  $(m+1)^{n} (mn+1)^{n-2} $ is the number of
Tamari intervals where the larger path is ``decorated'' by an $m$-parking
function~\cite{bergeron-preville}. 
This is proved in~\cite{mbm-chapuy-preville,mbm-chapuy-preville-prep}. 
%

\section{A functional equation for the \gf\ of intervals}
\label{sec:eq}
The aim of this section is to describe a recursive decomposition of $m$-Tamari
intervals, and to translate it into a functional equation satisfied by
the associated \gf\ (Proposition~\ref{prop:eq}). There are two main
tools: 
\begin{itemize}
\item we  prove that $\cTn^{(m)}$ can be seen as an upper ideal of
the usual Tamari lattice $\cT_{mn}$,
\item we give a simple criterion
to decide when two paths of the Tamari lattice are comparable.
\end{itemize}
 
\subsection{An alternative description of the $m$-Tamari  lattices}
\label{sec:reform}
Our first transformation is totally harmless: we apply a 45 degree
rotation to 1-ballot paths to transform them into \emm Dyck paths,.
A Dyck path of size $n$ consists  of steps $(1,1)$ (up steps) 
and  steps $(1,-1)$ (down steps), starts at $(0,0)$, ends
at $(0,2n)$ and never goes below the $x$-axis. 

We now introduce some terminology, and use it to rephrase the description of the
(usual) Tamari lattice $\cTn$.
Given a Dyck path $P$, and  an up step $u$ of $P$,  the shortest portion of $P$ that
starts with $u$ and forms a (translated) Dyck path is called the
\emm excursion of $u$ in $P$.,
We say that $u$ and the final step of its excursion \emm match,
each other. Finally, we say that $u$ has \emm rank, $i$ if it is the
$i^{\hbox{\small th}}$ up step of $P$.

 Given two Dyck paths $P$ and $Q$  of size $n$,  $Q$ covers
 $P$ in the Tamari lattice $\cTn$ if and only if there exists in  $P$
 a down step $d$, followed by an up step $u$, such that $Q$ is obtained from $P$ by swapping $d$ and $S$,
 where $S$ is the excursion of $u$ in $P$.
This description implies the following  property~\cite[Cor.~2.2]{BeBo07}.

\begin{Property}\label{fact:above}
If $P\leq Q$ in $\cT_n$ then $P$ is  below $Q$. That is, for
$i\in[0..2n]$, the ordinate of the vertex of $P$ lying at abscissa $i$
is at most  the ordinate of the vertex of $Q$ lying at abscissa $i$.
\end{Property}

Consider now an $m$-ballot path of size $n$, and  replace each north step
 by a sequence of $m$ north steps. This gives a 1-ballot path of size $mn$, and
 thus, after a rotation, a Dyck path. In this path, for each
 $i\in[0..n-1]$, 
the up steps of ranks $mi+1,\ldots,m(i+1)$
are consecutive. We call the Dyck paths satisfying this property
\emph{$m$-Dyck paths}. Clearly, $m$-Dyck paths of size $mn$ are in
one-to-one correspondence with $m$-ballot paths of size $n$.
Consider now the relation $\prec$ of Definition~\ref{def-m-tamari}: once
reformulated in terms of Dyck paths, it becomes a covering relation in the
(usual) Tamari lattice (Figure~\ref{fig:push_mPath}). Conversely, it
is easy to check that, if $P$ 
is an $m$-Dyck path and $Q$ covers $P$ in the usual Tamari lattice,
then $Q$ is also an $m$-Dyck path, and the $m$-ballot paths
corresponding to $P$ and $Q$ are related by $\prec$. We have thus
proved the following result.

\figeps{14}{push_mPath}{The relation $\prec$  of Figure~\ref{fig:push_mWalk}
reformulated  in terms of $m$-Dyck
paths ($m=2$).}

\begin{Proposition}
  The transitive closure of the relation $\prec$ defined in
  Definition~\ref{def-m-tamari} is a lattice on $m$-ballot paths of
  size $n$. This lattice is isomorphic to the sublattice of the Tamari
  lattice $\cT_{mn}$ consisting of the elements that are larger than
  or equal to the Dyck path $u^m d^m \ldots u^m d^m$. The relation
  $\prec$ is the covering relation of this lattice.
\end{Proposition}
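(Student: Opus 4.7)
The plan is to complete the argument essentially sketched in the preceding paragraphs, focusing on three remaining tasks: formalizing the correspondence between $\prec$ and Tamari covers, verifying the stability of $m$-Dyck paths under Tamari covers (so that they form an up-set of $\cT_{mn}$), and identifying the minimum $m$-Dyck path as $P_0 := u^m d^m \cdots u^m d^m$.

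Under the bijection between $m$-ballot paths and $m$-Dyck paths, a $\prec$-move on an $m$-ballot path translates as follows: the east step $a$ becomes a down step, the north step $b$ becomes the first up step $u$ of a run $u^m$ of $m$ consecutive up steps (so $u$ has rank $\equiv 1 \pmod{m}$), and the factor $S$, being a translated $m$-ballot path starting at $b$, becomes the excursion of $u$ in the $m$-Dyck path. This is exactly a Tamari cover. Conversely, any Tamari cover of an $m$-Dyck path $\tilde{P}$ swaps a down step $d$ with the excursion of the subsequent up step $u$; since $u$ is preceded by $d$ it begins a maximal run of up steps, and that run has length a multiple of $m$ by the $m$-Dyck property, so $u$ starts an $m$-group and the cover corresponds to a $\prec$-move. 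For the up-set property: writing $\tilde{P} = \alpha\, d\, u E'\, \beta$ and $\tilde{Q} = \alpha\, u E'\, d\, \beta$ for the two paths related by such a cover, the up-step ranks are preserved, and each $m$-group of $\tilde{P}$ lies entirely within one of the three segments $\alpha$, $u E'$, or $\beta$ (otherwise it would be split either by the down step $d$ between $\alpha$ and $u E'$, or by the down step ending $u E'$, contradicting the $m$-Dyck property). Hence the $m$-groups remain contiguous runs in $\tilde{Q}$, which is therefore $m$-Dyck.

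To identify the minimum, I would invoke the standard characterization of the Tamari order by excursion sizes: $P \leq Q$ in $\cT_N$ iff for each $i$, the excursion of the $i$-th up step of $P$ contains at most as many up steps as the excursion of the $i$-th up step of $Q$. For $P_0$, the $i$-th up step (with $i = mk + j$, $1 \leq j \leq m$) lies in the $(k+1)$-th block $u^m d^m$ and has excursion $u^{m-j+1} d^{m-j+1}$, hence containing exactly $m - j + 1$ up steps. For any $m$-Dyck path $P$, the $i$-th up step is the $j$-th up step of its $m$-group, and since that group is a contiguous run, its excursion contains at least the $m - j + 1$ up steps of the group from position $j$ onward. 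This pointwise inequality gives $P_0 \leq P$.

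Putting everything together, the $m$-Dyck paths form the principal up-set $[P_0, \top]$ of $\cT_{mn}$, which is a sublattice since meets and joins computed in $\cT_{mn}$ of two elements above $P_0$ are still above $P_0$ (for the meet, $P_0$ is a common lower bound). The Tamari order restricted to this up-set equals the transitive closure of the Tamari covers lying inside it (any cover-chain from an element of an up-set stays in the up-set), which by the first step is the transitive closure of $\prec$. Thus this transitive closure is a lattice on $m$-ballot paths, isomorphic to the sublattice of $\cT_{mn}$ above $P_0$, with $\prec$ as its covering relation. The delicate step is the identification of $P_0$: Property~\ref{fact:above} only gives that the Tamari order implies the pointwise geometric order, and the converse is false in general, so one really needs the finer excursion-size characterization (or a direct construction of a cover-chain from $P_0$ to any $m$-Dyck path) to conclude.
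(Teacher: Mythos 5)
Your proof is correct and follows essentially the same route as the paper: pass from $m$-ballot paths to $m$-Dyck paths, observe that $\prec$ becomes the restriction of the Tamari covering relation, and show that $m$-Dyck paths are upward closed in $\cT_{mn}$. The paper's own argument stops there, treating the identification of the minimum $m$-Dyck path with $u^md^m\cdots u^md^m$ and the sublattice property of a principal up-set as routine; you spell both out. Your observation that Property~\ref{fact:above} is only a one-way implication, so that the distance-function (excursion-size) characterization of Proposition~\ref{prop:distance} is genuinely needed to prove $P_0\le P$ for every $m$-Dyck path $P$, is a correct and useful point. Note that in the paper Proposition~\ref{prop:distance} is stated and proved \emph{after} the present Proposition and is billed as new (not ``standard''), but its proof does not depend on the present statement, so there is no circularity in invoking it here; alternatively, as you remark, a direct cover-chain from $P_0$ to $P$ would keep Section~2.1 self-contained. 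Your verification of the up-set property by noting that each $m$-group lies entirely inside one of the segments $\alpha$, $uE'$, $\beta$ is the same observation the paper leaves implicit, and is sound.
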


\noindent{\bf Notation.} From now on, we only consider Dyck paths. We
denote by $\cT$ the set of Dyck paths, and by $\cTn$ the Tamari lattice of
Dyck paths of length $n$. By $\cT^{(m)}$ we mean the set of $m$-Dyck
paths, and by $\cTn^{(m)}$ the Tamari lattice of $m$-Dyck paths of
size $mn$. This lattice is a sublattice of $\cT_{mn}$. Note that
$\cT^{(1)}= \cT$ and  $\cTn^{(1)}=\cTn$.

\subsection{Distance functions}
Let $P$ be a Dyck path of size $n$. For an up step $u$ of $P$, we
denote by   $\ell_P(u)$ the size of the excursion of $u$ in $P$.
The function $D_P:[1..n]\to[1..n]$ defined by $D_P(i)=\ell(u_i)$,
where $u_i$ is the $i^{\hbox{\small {th}}}$ up step of $P$,  is called the \emm
distance function, of $P$. It will sometimes be convenient to see
$D_P$ as a vector $(\ell(u_1), \ldots, \ell(u_n))$ with $n$
components. 
In particular, we will compare distance functions component-wise.
The main result of this subsection is a description of the Tamari
order in terms of distance functions. This simple characterization
seems to be new.
\begin{Proposition}\label{prop:distance}
 Let $P$ and $Q$ be two paths in the Tamari lattice $\cTn$. Then
 $P\leq Q$ if and only if $D_P\leq D_Q$.
\end{Proposition}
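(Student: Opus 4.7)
The plan is to prove both implications by induction, the key ingredient being a formula describing how the distance function transforms under a single covering move. I would first establish that if $Q$ covers $P$ via the swap of a down step $d$ (matched in $P$ with the $k$-th up step $u_k$) with the immediately following excursion $S$, then
$$
D_Q(k) = D_P(k) + |S|, \qquad D_Q(j) = D_P(j) \text{ for all } j \neq k.
$$
The change at coordinate $k$ reflects the absorption of $S$ into the excursion of $u_k$. The invariance at the other coordinates is the key point: the swap raises heights only within the window spanned by $d$ and $S$ and leaves heights elsewhere unchanged, so the first return of the path to the level just below any other $u_j$ occurs at the same position in $P$ and in $Q$, forcing $D_Q(j) = D_P(j)$. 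The forward direction then follows by induction on chain length.

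For the reverse direction I would induct on $N(P,Q) := \sum_{i=1}^n (D_Q(i) - D_P(i))$. The base case $N = 0$ gives $D_P = D_Q$ and hence $P = Q$, since the nested matching of a Dyck path can be recovered from its distance function. For the inductive step, let $a$ be the smallest index with $D_P(a) < D_Q(a)$. I claim that $d_a$ (the down step matching $u_a$ in $P$) is immediately followed by an up step. It cannot be the last step of $P$, for that would force $D_P(a) = n - a + 1 \geq D_Q(a)$. If it were followed by a down step $d'$, then $d'$ would match some $u_j$ with $j < a$, and since no up step lies between $d_a$ and $d'$ a short argument yields $j + D_P(j) = a + D_P(a)$. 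Nesting of excursions in $Q$ (the excursion of $u_j$ in $Q$ contains $u_a$, hence its enlarged excursion there) would then give
$$
D_Q(j) \geq (a - j) + D_Q(a) > (a - j) + D_P(a) = D_P(j),
$$
contradicting the minimality of $a$.

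Thus $d_a$ is followed by $u_{a'}$ with $a' := a + D_P(a)$. Performing the covering at this valley produces $P' \succ P$ with $D_{P'}(a) = D_P(a) + D_P(a')$; the same nesting argument gives $D_Q(a) \geq D_P(a) + D_Q(a') \geq D_P(a) + D_P(a')$, so $D_{P'} \leq D_Q$. Since $D_P(a') \geq 1$, the quantity $N$ strictly decreases, and the inductive hypothesis yields $P' \leq Q$, hence $P \leq Q$. The main subtlety lies in establishing that such a valley exists at the smallest index of $T := \{i : D_P(i) < D_Q(i)\}$: this is not purely a statement about $P$, and forces one to confront the simultaneous nesting of excursions in both $P$ and $Q$.
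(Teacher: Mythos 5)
Your proof is correct and takes essentially the same route as the paper: the single-covering lemma (how $D$ changes under one rotation), then induction on $\sum_i(D_Q(i)-D_P(i))$, locating the minimal index $a$ where the distance functions differ, showing a valley $d_a u_{a'}$ exists there, performing one covering move, and recursing. The one noteworthy technical difference is that the paper first proves (as an auxiliary induction on size) the geometric statement that $D_P \le D_Q$ forces $P$ to lie weakly below $Q$, and then uses this ``below'' property both for the base case $D_P=D_Q\Rightarrow P=Q$ and to derive the key inequality $\ell_Q(u) \ge \ell_P(u)+\ell_P(s)$; you instead extract all of these facts directly from the nesting of excursion ranks (if $u_b$ lies in the excursion of $u_a$ then $a+D(a) \ge b+D(b)$), which makes the argument a bit more self-contained and avoids the auxiliary geometric induction. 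Both versions are sound; yours trades the geometric picture for index-level bookkeeping, and it would be worth spelling out the base-case claim that the distance function determines the path (the paper gets this for free from ``below'', while you assert it as a reconstruction statement).
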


In order to prove this, we first describe the relation between  the
distance functions of two paths related by a covering relation.

\begin{Lemma}\label{lem:Dpush}
Let $P$ be a Dyck path, and $d$ a down step of $P$ followed by an up step
$u$. Let $S$ be the excursion of  $u$ in $P$, and let $Q$ be the path
obtained from $P$  by swapping $d$ and $S$.
Let $u'$ be the up step matched with $d$ in $P$, and $i_0$ the rank  
of $u'$ in $P$. 
Then $D_{Q}(i)=D_P(i)$ 
for each $i\neq i_0$ and $D_{Q}(i_0)=D_P(i_0)+\ell_P(u)$.  
\end{Lemma}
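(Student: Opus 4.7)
The plan is to decompose $P$ around the covering move and then track both the rank and the excursion size of each up step. Concretely, write
\[
P = A\, u'\, B\, d\, S\, C, \qquad Q = A\, u'\, B\, S\, d\, C,
\]
where $A$ is the prefix of $P$ preceding $u'$, $B$ is the factor strictly between $u'$ and $d$, $S$ is the excursion of $u$ in $P$, and $C$ is the suffix after $S$. Since $u'$ is matched with $d$, the factor $u'Bd$ is itself a (translated) Dyck path, so $B$ is a Dyck path.

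First I would observe that the rank of every up step is the same in $P$ and in $Q$: the move only transposes the single down step $d$ with the balanced factor $S$, so the number of up steps preceding any given up step is unaffected. Next I would analyse excursions, using the standard fact that the matching between up and down steps in a Dyck path is preserved under insertion or deletion of a balanced factor. For an up step $v\neq u'$, its matching down step is the same in $P$ and in $Q$. There are essentially three cases: $v$ and its match lie together in one of the factors $A$, $B$, $S$, or $C$, in which case the excursion is literally unchanged; or $v$ lies in $A$ and its match lies in $C$, in which case the excursion contains the factor $u'BdS$ in $P$ and $u'BSd$ in $Q$, two factors of the same total length; or $v$ lies in $S$, whose matching is internal to $S$ since $S$ is a Dyck path. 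In every case $\ell_Q(v)=\ell_P(v)$, and combined with rank preservation this gives $D_Q(i)=D_P(i)$ for all $i\neq i_0$.

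Finally I would handle $u'$ itself. Its excursion in $P$ is $u'Bd$, of size $D_P(i_0)$, while in $Q$ the matching of $u'$ is still $d$ but the intervening block is now $BS$, so the excursion of $u'$ in $Q$ is $u'BSd$, of size $D_P(i_0)+\ell_P(u)$. This yields the formula $D_Q(i_0)=D_P(i_0)+\ell_P(u)$ claimed in the lemma. There is no real obstacle; the only step that requires genuine care is the ``$A$ to $C$'' sub-case of the excursion analysis, where one must check that merely reordering the two balanced blocks $Bd$ and $S$ does not alter the length of any excursion that spans them. Everything else is bookkeeping on the decomposition above.
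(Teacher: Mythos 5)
Your proof is correct and takes essentially the same route the paper intends: the paper's ``proof'' is just a pointer to Figure~\ref{fig:proof_push}, which depicts exactly the decomposition $P = A\,u'\,B\,d\,S\,C \mapsto Q = A\,u'\,B\,S\,d\,C$ that you write out, and your case analysis (ranks preserved because only a down step and a balanced block are transposed; excursions unchanged for all $v\neq u'$ because matches stay in the same factor or the spanning portion keeps the same length; the excursion of $u'$ grows from $u'Bd$ to $u'BSd$) is the careful written form of what that figure is meant to convey.
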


\figeps{14}{proof_push}{How the distance function changes in a
  covering relation.}

This  lemma is easily proved using Figure~\ref{fig:proof_push}. It already implies
that $D_P\leq D_Q$ if $P\leq Q$.
 The next lemma establishes the reverse implication, thus concluding
 the proof of Proposition~\ref{prop:distance}.

\begin{Lemma}\label{lem:leqTamari}
Let $P$ and $Q$ be two Dyck paths of size $n$ 
such that $D_P\leq D_Q$. Then $P\leq Q$ in the Tamari lattice $\cTn$.
\end{Lemma}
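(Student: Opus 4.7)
The plan is to induct on $N := \sum_{i=1}^n \bigl( D_Q(i) - D_P(i) \bigr) \geq 0$. When $N = 0$ we have $D_P = D_Q$; since the distance function determines the matching of every up step and hence the whole Dyck path, this forces $P = Q$. For the inductive step it suffices to exhibit a Dyck path $P'$ with $P \prec P'$ and $D_P \lneq D_{P'} \leq D_Q$: the induction hypothesis applied to $(P', Q)$, combined with $P \prec P'$, then yields $P < P' \leq Q$.

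To construct such a $P'$, I let $i_0$ be the smallest index with $D_P(i_0) < D_Q(i_0)$, set $k = D_P(i_0)$, and denote by $u_{i_0}$ the $i_0$-th up step of $P$. The key claim is that in $P$, the excursion of $u_{i_0}$ is immediately followed by an up step, which is then necessarily the $(i_0+k)$-th up step $u_{i_0+k}$. Indeed, only three things can follow that excursion in $P$: an up step, a down step, or nothing at all. The last case would force $D_P(i_0) = n - i_0 + 1$, the maximum value any $D(i)$ can take, contradicting $D_P(i_0) < D_Q(i_0) \leq n - i_0 + 1$. If a down step followed, then $u_{i_0}$'s excursion would be the rightmost direct sub-excursion of a strictly larger enclosing excursion in $P$, starting at some up step $u_j$ with $j < i_0$; being the rightmost direct sub-excursion forces $D_P(i_0) = D_P(j) - (i_0 - j)$. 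But minimality of $i_0$ gives $D_Q(j) = D_P(j)$, so in $Q$ the up steps inside $u_j$'s excursion are precisely those of ranks $j, j+1, \ldots, j + D_P(j) - 1$. Since $u_{i_0}$ is among them, the nested/disjoint property of excursions forces $u_{i_0}$'s excursion in $Q$ to sit inside $u_j$'s, whence $i_0 + D_Q(i_0) - 1 \leq j + D_Q(j) - 1 = i_0 + D_P(i_0) - 1$, contradicting $D_Q(i_0) > D_P(i_0)$.

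Given the claim, I let $d$ be the down step at the end of $u_{i_0}$'s excursion in $P$, and let $P'$ be obtained from $P$ by swapping $d$ with the excursion of $u_{i_0+k}$; this is exactly the covering move, so $P \prec P'$. Lemma~\ref{lem:Dpush} then yields $D_{P'}(i) = D_P(i)$ for $i \neq i_0$ and $D_{P'}(i_0) = k + D_P(i_0+k) > D_P(i_0)$. It remains to verify $D_{P'}(i_0) \leq D_Q(i_0)$. Since $D_Q(i_0) > k$, the up step $u_{i_0+k}$ lies inside $u_{i_0}$'s excursion in $Q$, so by the nesting property its excursion in $Q$ is contained in that of $u_{i_0}$, giving $D_Q(i_0+k) \leq D_Q(i_0) - k$. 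Combined with $D_P(i_0+k) \leq D_Q(i_0+k)$ this yields $D_{P'}(i_0) = k + D_P(i_0+k) \leq D_Q(i_0)$, completing the inductive step.

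I expect the only delicate point to be the claim that $u_{i_0}$'s excursion is followed by an up step: the trick is to leverage the minimality of $i_0$ to transfer information from $P$'s excursion structure to $Q$'s via the equalities $D_P(j) = D_Q(j)$ for $j < i_0$, together with the nested/disjoint property of excursions. Everything else is a routine application of Lemma~\ref{lem:Dpush}.
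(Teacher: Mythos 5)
Your proof is correct, and the overall strategy matches the paper's: induct on $\sum_i\bigl(D_Q(i)-D_P(i)\bigr)$, locate the smallest index $i_0$ with $D_P(i_0)<D_Q(i_0)$, show that the step immediately after the excursion of $u_{i_0}$ in $P$ is an up step, perform the corresponding covering move, and invoke Lemma~\ref{lem:Dpush}. The genuine difference is in \emph{how} you establish the two pivotal facts --- that the step following the excursion is an up step (not a down step, not end-of-path), and that $D_{P'}(i_0)\leq D_Q(i_0)$. The paper first proves, by a separate induction on the size, the geometric statement that $D_P\leq D_Q$ implies $P$ lies pointwise below $Q$, and then uses this picture repeatedly: the identification of the first abscissa where $P$ and $Q$ diverge, the argument ruling out a down step after $d$, and the inequality $\ell_Q(u)\geq \ell_P(u)+\ell_P(s)$ all appeal to this ``below'' relation. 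You avoid that geometric interlude entirely and argue purely from the combinatorial structure of excursions: each excursion of rank $i$ covers exactly ranks $i,\ldots,i+D(i)-1$, and two excursions that share an up step must be nested. This lets you rule out the down-step case by transferring the equality $D_P(j)=D_Q(j)$ for $j<i_0$ directly into a nesting contradiction in $Q$, and lets you bound $D_{P'}(i_0)$ by nesting $u_{i_0+k}$'s excursion inside $u_{i_0}$'s excursion in $Q$. The paper's version is more visual; yours is more self-contained (no auxiliary lemma about pointwise domination) and arguably cleaner, since a single induction carries the whole proof. One small remark: the base case deserves one more sentence --- the distance function determines each up step's matched down step via a straightforward right-to-left reconstruction, which is how $D_P=D_Q$ forces $P=Q$; the paper instead gets this for free from the ``below in both directions'' argument.
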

\begin{proof}
Let us first prove, by induction on the size,  that  $P$ is below
$Q$ (in the sense of Property~\ref{fact:above}). This is clearly true
if $n=0$, so we assume $n>0$.

Let $u$ be the first up step (in $P$ and $Q$). Note that
$\ell_P(u)=D_P(1)\leq D_Q(1)=\ell_Q(u)$.  
Let $P'$ (resp. $Q'$) be the path obtained from $P$ (resp. $Q$) by
contracting $u$ and the down step matched with $u$. 
Observe that  $D_{P'}$ is obtained by deleting the first component of
$D_P$, and similarly for  $D_{Q'}$ and $D_Q$. Consequently  $D_{P'}\leq
D_{Q'}$, and hence 
by the induction hypothesis, $P'$ is below $Q'$. 
Let us consider momentarily Dyck paths as functions, and write $P(i)=j$ if the
vertex of $P$ lying at abscissa $i$ has ordinate $j$. 
Note that $P(i)=P'(i-1)+1$ 
 for $1\leq i< 2\ell_P(u)$, and
$P(i)=P'(i-2)$ for $2\ell_P(u)\leq i\leq 2n$.  
Similarly $Q(i)=Q'(i-1)+1$ for $1\leq i< 2\ell_Q(u)$, and
$Q(i)=Q'(i-2)$ for $2\ell_Q(u)\leq i\leq 2n$.  
Since $\ell_P(u)\leq \ell_Q(u)$ and $P'(i)\leq Q'(i)$ for $0\leq i\leq
2n-2$, one easily checks that 
$P(i)\leq Q(i)$ for $0\leq i\leq 2n$, so that $P$ is below $Q$.

\medskip

In order to prove that $P \leq Q$, we proceed by induction on
$||D_{P}-D_Q||$, where $||(x_1,  \ldots, x_n)||= |x_1|+ \cdots
+|x_n|$. 
If $D_P=D_Q$ then $P=Q$, because $P$ is below $Q$ and $Q$ is below
$P$.
So let us assume that $D_P\not=D_Q$.
Let $i$ be minimal such that $D_P(i)<D_Q(i)$. We claim that
$P$ and $Q$ coincide at least  until their up step of rank
$i$. Indeed,  since $P$ lies below $Q$, the paths
$P$ and $Q$ coincide up to some 
abscissa, and then we find a down step $\delta$ in $P$ but an up step in
$Q$. Let $j$ be the rank of the up step that matches $\delta$ in
$P$. This up step belongs also to $Q$, and, since $\delta \not \in Q$,
we have
 $D_P(j)< D_Q(j)$. Hence $j\ge i$ by minimality of $i$, and  $P$
 and $Q$ coincide at least until their up step of rank $i$, which we
 denote by $u$. Let $d$ be the down step matched with $u$ in
 $P$ (Figure~\ref{fig:snotdescending}). Since $D_P(i)<D_Q(i)$, the step $d$ 
is \emm not, a step of $Q$. The step of $Q$ located at the same
abscissa as $d$ ends strictly higher than $d$, and in particular, at a
positive ordinate. Hence $d$ is not the final step of $P$. Let $s$ be the
step following  $d$ in $P$.

\figeps{12}{snotdescending}{Why $s$ cannot be descending.}

Let us prove \emm ad  absurdum, that $s$ is an up step. Assume $s$ is
down. Then $s$ is matched in $P$ with an up step $u'$ of rank
$j<i$ (Figure~\ref{fig:snotdescending}). Hence $u'$ belongs to  $Q$ and has rank
$j$ in $Q$. Since $s$ cannot 
belong to $Q$, this implies that $D_P(j)<D_{Q}(j)$, which contradicts
the minimality of~$i$.

 Hence $s$ is an up step of $P$ (Figure~\ref{fig:proof_lem}). Let  $S$ be the excursion of $s$ in $P$.  
Since $\ell_Q(u)>\ell_P(u)$ and since $Q$ is above $P$, we have
$\ell_Q(u)\geq \ell_P(u)+\ell_P(s)$,
\emm i.e.,, $D_Q(i)\geq D_P(i)+\ell_P(s)$. 
Let $P'$ be the path obtained from $P$ by swapping $d$ and $S$. Then
$P'$ covers $P$ in the Tamari lattice. By Lemma~\ref{lem:Dpush},
$D_P=D_{P'}$ except at index $i$ (the rank of $u$), where
$D_{P'}(i)=D_P(i)+\ell_P(s)$.
Since $D_P(i)+\ell_P(s)\le D_Q(i)$, we have $D_{P'}\leq D_Q$. But
$||D_{P'}-D_Q||= ||D_{P}-D_Q||-\ell_P(s)$ and  by the induction hypothesis,
$P'\leq Q$ in the Tamari lattice. 
Hence $P< P'\leq Q$, and the lemma is proved.
\end{proof}

\figeps{12}{proof_lem}{General form of $P$ and $Q$.}

\subsection{Recursive decomposition of intervals and functional
  equation}
\label{sec:rec}
A \emph{contact} of a Dyck path $P$ is a vertex of $P$ lying on the
$x$-axis. It is \emm initial, if it is $(0,0)$.  A \emm contact, of a
Tamari interval $[P, Q]$ is a contact of the \emm lower, path $P$. The
recursive decomposition of intervals that we use makes  the number of
contacts crucial, and we  say that this parameter is \emm catalytic,. We  also
consider another, non-catalytic parameter, which we find to be
equidistributed with non-initial contacts (even more, the joint
distribution of these two parameters is symmetric). Given an $m$-Dyck
path $Q$, the length of the initial run of up steps is 
of the form $mk$; the integer $k$ is called 
the \emph{initial rise} of $Q$. The \emm initial rise, of an interval $[P,
Q]$ is the initial rise of the \emm upper, path $Q$.  The aim of this subsection is to
establish the following functional equation.
\begin{Proposition}\label{prop:eq}
For $m\ge 1$, let  $F(x)\equiv F^{(m)}(t;x)$ be the \gf\ of $m$-Tamari
intervals, where $t$ counts the size (divided by $m$)
and $x$ the number of contacts. Then
$$
F(x)=x+xt\left(F(x)\cdot  \Delta\right)^{(m)}(F(x)),
$$
where $\Delta$ is the following divided difference operator
$$
\Delta S(x)=\frac{S(x)-S(1)}{x-1},
$$
and the power $m$ means  that the operator $G(x)\mapsto F(x)\cdot \Delta
G(x)$ is applied $m$ times to $F(x)$.

More generally, if  $F(x,y)\equiv F^{(m)}(t;x,y)$ keeps track in
addition of the initial rise (via the variable $y$), we have the
following functional equation:
\beq\label{eq:Fb}
F(x,y)=x+xyt\left(F(x,1)\cdot \Delta\right)^{(m)}(F(x,y)).
\eeq
\end{Proposition}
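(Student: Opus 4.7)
I would establish~\eqref{eq:Fb} via a recursive decomposition of nonempty $m$-Tamari intervals that mirrors the right-hand side term by term. The key preliminary is the \emph{first-arch decomposition}: any nonempty $m$-Dyck path $P$ writes uniquely as $P = u^m P^{(1)} d P^{(2)} d \cdots d P^{(m+1)}$, with each $P^{(i)}$ an $m$-Dyck path and the $m$ distinguished $d$'s matching the initial up steps $u_m, \ldots, u_1$ in reverse order. Under this decomposition, the initial rise satisfies $r(P) = 1 + r(P^{(1)})$ and the contacts split as $c(P) = 1 + c(P^{(m+1)})$.

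Given a nonempty interval $[P, Q]$ with both paths so decomposed, Proposition~\ref{prop:distance} recasts $P \le Q$ as $D_P \le D_Q$. Applied to the initial $m$ up steps, this forces the cumulative-size bounds
$$\sum_{i=1}^{j} |P^{(i)}| \;\le\; \sum_{i=1}^{j} |Q^{(i)}|, \qquad j = 1, \ldots, m,$$
while the remaining components compare distance functions whose same-rank up steps may sit in differently-indexed pieces on the two sides whenever a cumulative bound is strict. Property~\ref{fact:above} furthermore guarantees that every contact of $Q$ is a contact of $P$, so any cut of $Q$ at a height-$0$ point induces a compatible cut of $P$.

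The combinatorial heart is then to construct a bijection sending $[P, Q]$ to a pair consisting of an outer shell (contributing $xyt$: one contact at $(0, 0)$, the $+1$ to the initial rise coming from the outer $u^m$ layer, and one unit of size) together with an $(m+1)$-tuple of $m$-Tamari sub-intervals $(I_m, I_{m-1}, \ldots, I_0)$ assembled by $m$ successive gluings along chosen contacts of lower paths. Each $I_k$ pairs one of $Q^{(m+1)}, \ldots, Q^{(1)}$ with a portion of $P$ reconstituted from the $P^{(i)}$'s, the reconstitution being governed by the cumulative slacks $s_j = \sum_{i \le j}|Q^{(i)}| - \sum_{i \le j}|P^{(i)}|$. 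Each gluing step amounts to choosing a contact of the current lower path at which to attach the next sub-interval, and is realized algebraically by $S(x) \mapsto F(x, 1) \cdot \Delta S(x)$: the operator $\Delta S(x) = \sum_k s_k(1 + x + \cdots + x^{k-1})$ encodes the contact selection, while the factor $F(x, 1)$ accounts for the newly attached sub-interval whose initial rise is not tracked. The innermost factor $F(x, y)$ comes from $I_0$, paired with $Q^{(1)}$, since this is the only sub-interval whose initial rise enters the overall $r(Q) = 1 + r(Q^{(1)})$.

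I expect the main obstacle to be making the regrouping of the $P$-pieces rigorous and showing that the resulting correspondence is an exact bijection. When some slack $s_j$ is positive, a single $P^{(i)}$ must be split across the boundary between two consecutive $Q$-sub-intervals, and one must check that (i) the resulting lower paths are valid $m$-Dyck paths, (ii) the Tamari inequalities within each sub-interval are inherited from $D_P \le D_Q$, and (iii) the $m$ contact selections, which algebraically form the iterated operator $(F(x, 1) \cdot \Delta)^m$, produce no overcounting or omission. The crux is verifying that the combinator $F(x, 1) \cdot \Delta$ mirrors exactly the step "attach the next sub-interval at a chosen contact," so that iterating it $m$ times produces precisely the required $(m+1)$-tuples with the correct shared-contact structure.
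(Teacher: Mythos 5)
Your proposal follows essentially the same approach as the paper: the ``first-arch decomposition'' is precisely the $m$-reduction of Section~2.3, the characterization via distance functions (Proposition~\ref{prop:distance}) is invoked in the same way, both paths are cut at the contacts determined by the upper path, and the choice of where to place the lower path's marked contacts is encoded by $\Delta$ exactly as in the paper's derivation of~\eqref{Gk}. The only difference is presentational: the paper introduces $k$-pointed intervals and proves the one-step recursion $G^{(k+1)}=F(x,1)\cdot\Delta G^{(k)}$, then iterates, whereas you describe the full $(m+1)$-fold cut at once, but the bijection and the verification obstacles you flag are the ones the paper's Lemma~\ref{lem:interval_reduc} and the $\Phi$-bijection resolve.
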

Note that each of the above two equations defines a unique formal
power series in 
$t$ (think of extracting inductively the coefficient of $t^n$ in
$F^{(m)}(t;x)$ or $F^{(m)}(t;x,y)$).
\\
\noindent{\bf Examples}\\
1. When $m=1$, the above equation reads
\begin{eqnarray*}
  F(x,y)&=&x+xytF(x,1)\cdot \Delta(F(x,y))\\
&=& x+xyt F(x,1)\  \frac{F(x,y)-F(1,y)}{x-1}.
\end{eqnarray*}
When $y=1$, we obtain, in the terminology of~\cite{mbm-jehanne}, a
 quadratic equation with one catalytic variable:
$$
F(x)= x+xt F(x)\  \frac{F(x)-F(1)}{x-1}.
$$
2. When $m=2$,
\begin{eqnarray*}
  F(x,y)&=&x+xyt\ F(x,1)\cdot  \Delta(F(x,1)\cdot \Delta(F(x,y)))\\
&=& x+xyt\ F(x,1)\cdot \Delta\left(F(x,1) \  \frac{F(x,y)-F(1,y)}{x-1}\right)\\
&=&x+\frac{xyt}{x-1}\  F(x,1)\  \left( F(x,1) \ \frac{F(x,y)-F(1,y)}{x-1}-F(1,1)F'(1,y)\right),
\end{eqnarray*}
where the derivative is taken with respect to the variable $x$.
When $y=1$, we obtain a cubic
equation with one catalytic variable:
$$
F(x)= x+\frac{xt}{x-1} F(x)\  \left( F(x)\ 
\frac{F(x)-F(1)}{x-1}-F(1)F'(1)\right).
$$
The solution of~\eqref{eq:Fb} will be the topic of the next
section. For the moment we focus on the proof of this equation.

\medskip
We say that a vertex $q$ lies to the right of a vertex $p$ if the
abscissa of $q$ is greater than  or equal to the abscissa of $p$.
A \emph{$k$-pointed Dyck path} is a tuple $(P;p_1,\ldots,p_k)$ where $P$ is a
Dyck path and $p_1,\ldots,p_k$ are contacts of $P$ such that $p_{i+1}$
lies to the right of $p_i$, for $1\le i<k$ (note that some $p_i$'s
may coincide). Given an $m$-Dyck path $P$ of positive size,
let $u_1,\ldots,u_m$
be the initial (consecutive) up steps of $P$, and let $d_1,\ldots,d_m$
be the down steps matched with $u_1,\ldots,u_m$, respectively.
The \emph{$m$-reduction} of $P$ is the $m$-pointed Dyck path
$(P';p_1,\ldots,p_m)$ where $P'$ is obtained from $P$  
by contracting all the steps $u_1,\ldots,u_m,d_1,\ldots,d_m$, and
$p_1,\ldots,p_m$ are  the  vertices of $P'$ resulting from the contraction of
$d_1,\ldots,d_m$. It is easy to check that they are indeed contacts of
$P'$ (Figure~\ref{fig:reduction}).

\figeps{14}{reduction}{The $m$-reduction of an $m$-Dyck
path ($m=2$).}

The map $P\mapsto(P';p_1,\ldots,p_m)$  is clearly invertible, hence
$m$-Dyck paths of size $mn$ are in bijection with $m$-pointed $m$-Dyck 
paths of size $m(n-1)$. 
Note that the non-initial contacts of $P$  correspond to the contacts
of $P'$ that lie  to the right of $p_m$. 
Note also that the  distance function $D_{P'}$ (seen as a vector with
$m(n-1)$ components) is obtained by deleting the first $m$ components of $D_P$. Conversely, denoting by $2x_i$ 
the abscissa of $p_i$, $D_P$ is obtained  by 
prepending to $D_{P'}$ the sequence
$(x_m+m,x_{m-1}+m-1,\ldots,x_1+1)$.
In view of
Proposition~\ref{prop:distance}, this gives the following recursive
characterization of intervals. 

\begin{Lemma}\label{lem:interval_reduc}
Let $P$ and $Q$ be two $m$-Dyck paths of size $mn>0$. Let
$(P';p_1,\ldots,p_m)$ and $(Q';q_1,\ldots,q_m)$ be the $m$-reductions
of $P$ and $Q$ respectively. 
Then $P\leq Q$ in $\cT_n^{(m)}$ if and only if $P'\leq Q'$ in $\cT_{n-1}^{(m)}$
and for $i\in[1..m]$, the point $q_i$ lies to the right of  $p_i$. 

 The non-initial contacts of $P$  correspond to the contacts of $P'$ located
 to the right of $p_m$.  
\end{Lemma}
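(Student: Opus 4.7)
My plan is to deduce the lemma from the distance-function characterization of the Tamari order (Proposition~\ref{prop:distance}) together with the explicit formula linking $D_P$, $D_{P'}$ and the $x_i$'s derived in the paragraph just before the lemma. The argument separates the claim on intervals from the (easier) claim on non-initial contacts.

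For the interval statement, I would first rewrite both lattice comparisons as componentwise comparisons of distance vectors. Since $\cT_n^{(m)}$ is a sublattice of $\cT_{mn}$, we have $P\leq Q$ in $\cT_n^{(m)}$ iff $P\leq Q$ in $\cT_{mn}$ iff $D_P\leq D_Q$ coordinatewise, by Proposition~\ref{prop:distance}; the same holds at the reduced level, giving $P'\leq Q'$ in $\cT_{n-1}^{(m)}$ iff $D_{P'}\leq D_{Q'}$. I would then plug in the explicit form $D_P=(x_m+m,\,x_{m-1}+(m-1),\,\ldots,\,x_1+1,\,D_{P'}(1),\ldots,D_{P'}(m(n-1)))$ and its analogue for $D_Q$ with the $y_i$'s, where $2y_i$ is the abscissa of $q_i$. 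The inequality $D_P\leq D_Q$ then splits into two independent blocks: the last $m(n-1)$ coordinates are exactly $D_{P'}\leq D_{Q'}$, which gives $P'\leq Q'$ in $\cT_{n-1}^{(m)}$; the first $m$ coordinates, after cancelling the additive constants $m-i+1$, collapse to $x_i\leq y_i$ for all $i\in[1..m]$, which is precisely the condition that $q_i$ lies to the right of $p_i$.

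For the second assertion I would track the height-$0$ vertices of $P$ through the reduction. Writing $P=u_1\cdots u_m E_m d_m E_{m-1}d_{m-1}\cdots E_1 d_1 R$, where $R$ is the part of $P$ after the outermost arch, the contraction of $u_1,\ldots,u_m$ merges the initial vertex $(0,0)$ of $P$ with the vertices of the opening climb, so the initial contact of $P$ is absorbed into the initial vertex of $P'$ and disappears as a contact; the contact of $P$ at the end of the outermost arch (the endpoint of $d_1$) becomes, after contraction of the $d_i$'s, the rightmost of the marked contacts $p_1,\ldots,p_m$, namely $p_m$. Every further non-initial contact of $P$ lies inside $R$, is preserved by the reduction, and becomes a contact of $P'$ strictly past $p_m$. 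This yields the claimed bijection. The only delicate point throughout is notational: the prepended sequence $(x_m+m,\ldots,x_1+1)$ forces the convention that $p_m$ is the rightmost of the marked contacts and $p_1$ the leftmost, and once this convention is fixed each step above is short and essentially computational.
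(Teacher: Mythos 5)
Your argument is correct and is exactly the route the paper itself takes: the paper records the relation between $D_P$, $D_{P'}$ and the abscissae of the $p_i$, and then states that the lemma follows in view of Proposition~\ref{prop:distance}; you have simply unpacked that one-line deduction, including the observation that $p_m$ must be the rightmost marked contact. (A harmless slip: the additive constant attached to $x_i$ in the prepended sequence is $i$, not $m-i+1$, but it cancels from both sides of the componentwise inequality in either case.)
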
 
Let us call \emm $k$-pointed interval, in
$\cT^{(m)}$  a pair consisting  of two
 $k$-pointed $m$-Dyck paths $(P;p_1,\ldots,p_k)$ and
 $(Q;q_1,\ldots,q_k)$ such that $P\le Q$ and for $i\in[1..k]$, the
  point $q_i$ lies to the right of $p_i$. 
An \emph{active contact} of such a pair is a contact of $P$ lying
 to the right of $p_k$ (if $k=0$, all contacts are declared active).  For $0\le k \le m$, let us denote by
$G^{(m,k)}(t;x,y)\equiv G^{(k)}(x,y)$ the \gf\ of $k$-pointed
$m$-Tamari intervals, where $t$ counts the size (divided by $m$), $x$
the number of active contacts,  and $y$ the initial rise (we drop the
superscript $m$ since it will not vary). 
In particular, the series we are interested in is
\beq\label{eq:fun1}
F(x,y)=G^{(0)}(x,y).
\eeq
Moreover, Lemma~\ref{lem:interval_reduc} implies
\beq\label{eq:fun2}
F(x,y)=x+xyt\ \!G^{(m)}(x,y).
\eeq
We will  prove that, for $k \ge 0$,
\beq\label {Gk}
G^{(k+1)}(x,y)=F(x,1)\cdot\Delta G^{(k)}(x,y).
 \eeq
The functional equation~\eqref{eq:Fb} then follows using~\eqref{eq:fun1}
and~\eqref{eq:fun2}.

For $k\geq 0$, let $I=[P^\bullet, Q^\bullet]$ be a $(k+1)$-pointed
interval in $\cT^{(m)}$, with $P^\bullet=(P;p_1,\ldots,p_{k+1})$ and
$Q^\bullet=(Q;q_1,\ldots,q_{k+1})$
(see an illustration in Figure~\ref{fig:dec_int_bis} when $k=0$). 
Since $P$ is  below $Q$, the contact $q_{k+1}$ of $Q$ is
also a contact of $P$.  By definition of pointed intervals, $q_{k+1}$ is 
to the right of $p_1,\ldots,p_{k+1}$.
Decompose $P$ as $\Pl\Pr$ where $\Pl$ is the part of $P$
to the left of $q_{k+1}$ and $\Pr$ is the part of $P$ to
the right of $q_{k+1}$. Decompose similarly $Q$ as $\Ql\Qr$, where the
two factors meet at $q_{k+1}$.
 The distance function $D_P$ (seen as a vector)
is $D_{\Pl}$ concatenated with $D_{\Pr}$, and similarly for $D_Q$. 
In particular, $D_{\Pl}\le D_{\Ql}$ and $D_{\Pr}\le D_{\Qr}$. By
Proposition~\ref{prop:distance},  $I_r:=[\Pr, \Qr]$ is an
 interval, while
$I_\ell:= [P^\circ, Q^\circ]$, with $P^\circ=(\Pl;p_1,\ldots,p_k)$ and
$Q^\circ=(\Ql;q_1,\ldots,q_k)$, is a $k$-pointed interval.  
Its initial rise equals the initial rise of $I$. We denote by $\Phi$
the map that sends $I$ to the pair of intervals $(I_r,I_\ell)$.

\figeps{14}{dec_int_bis}{The recursive decomposition of
  intervals. Starting from an $m$-Tamari interval of size $n$ (here,
  $m=1$ and  $n=7$), one first obtains by reduction an $m$-pointed 
  interval of size $n-1$ (Lemma~\ref{lem:interval_reduc}). This 
  interval is further decomposed  into two intervals, the first
  one being $(m-1)$-pointed.}

Conversely, take an interval $I_r=[\Pr,\Qr]$ and a $k$-pointed interval
$I_\ell=[P^\circ, Q^\circ]$, where 
$P^\circ=(\Pl;p_1,\ldots,p_k)$ and $Q^\circ=(\Ql;q_1,\ldots,q_k)$.
Let $P=\Pl\Pr$, $Q=\Ql\Qr$, and denote by $q_{k+1}$
the point where $\Ql$ and $\Qr$ (and $\Pr$ and $\Pl$) meet.  This is a
contact of $P$ and $Q$.
Then the preimages of $(I_r,I_\ell)$ by $\Phi$
are all the intervals $I=[P^\bullet, Q^\bullet]$ such that
$P^\bullet=(P;p_1,\ldots,p_{k+1})$ 
and $ Q^\bullet=(Q;q_1,\ldots,q_{k+1})$, where $p_{k+1}$ is any \emm active,
contact of $\Pl$. If $\Pl$ has $i$ active contacts and $\Pr$ has $j$ contacts, then  $(I_r,I_\ell)$ has
$i$ preimages, having respectively $j, 1+j, \ldots, i+j-1$ active
contacts 
($j$ active contacts when $p_{k+1}=q_{k+1}$, and $i+j-1$ active contacts when
$p_{k+1}=p_k$). 
Let us write $G^{(k)}(x,y)= \sum_{i\ge 0} G^{(k)}_i(y)
x^i$, so that $G^{(k)}_i(y)$ counts (by the size and the initial rise)
$k$-pointed intervals with $i$ active contacts. The above discussion gives
\begin{eqnarray*}
  G^{(k+1)}(x,y)&=&F(x,1)  \sum_{i\ge 1} G^{(k)}_i(y) (1+x+\cdots + x^{i-1})
\\ 
&=& F(x,1) \sum_{i\ge 1} G^{(k)}_i(y) \frac{x^i -1}{x-1}
\\
&=&F(x,1) \cdot \Delta G^{(k)}(x,y),
\end{eqnarray*}
as claimed in~\eqref{Gk}. The factor $F(x,1)$ accounts for the choice
of $I_r$, and the term $\Delta G^{(k)}(x,y)$ for the choice of
$I_\ell$ and $p_{k+1}$.
This completes the proof of Proposition~\ref{prop:eq}. \qed

\section{Solution of the functional equation}
\label{sec:sol}
In this section, we solve the functional equation of
Proposition~\ref{prop:eq}, and thus establish the main result of this
paper.  We obtain in particular an unexpected symmetry property: the
series $yF^{(m)}(t;x,y)$ is symmetric in $x$ and $y$. In other words, the
joint distribution of the number of non-initial contacts (of the lower
path) and the
initial rise (of the upper path) is symmetric.

For any ring $\GA$, we denote by $\GA[x]$ the ring of polynomials in
$x$ with coefficients in $\GA$, and by $\GA[[x]]$ the ring of \fps\ in
$x$ with coefficients in $\GA$. This notation is extended to the case
of polynomials and series in several indeterminates $x_1, x_2, \ldots$
\begin{Theorem}\label{thm:series}
For  $m\ge 1$,  let $F^{(m)}(t;x,y)$  be the \gf\  of
$m$-Tamari intervals, where $t$ counts the size (divided by $m$), $x$ the
number of contacts of the bottom path, and $y$ the initial rise of the upper
path.  Let $z$, $u$ and $v$ be three indeterminates, and set
\beq\label{t-x-y-param}
t=z(1-z)^{m^2+2m}, 
\quad x=\frac{1+u}{(1+zu)^{m+1}},
\quad \hbox{and} \quad y=\frac{1+v}{(1+zv)^{m+1}}.
\eeq
Then $F^{(m)}(t;x,y)$ becomes a formal power series in $z$ with coefficients
in $\qs[u,v]$,  and this series is rational. More precisely,
\beq\label{F-param}
y F^{(m)}(t;x,y)= \frac{(1+u)(1+zu)(1+v)(1+zv)}{(u-v)(1-zuv)(1-z)^{m+2}}
\left( \frac{1+u}{(1+zu)^{m+1}}-\frac{1+v}{(1+zv)^{m+1}}\right).
\eeq
In particular, $y F^{(m)}(t;x,y)$ is a symmetric series in $x$ and $y$.
\end{Theorem}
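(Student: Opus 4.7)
The plan is to carry out the guess-and-check verification announced in the introduction. Proposition~\ref{prop:eq} characterizes $F^{(m)}(t;x,y)$ as the unique formal power series solution of~\eqref{eq:Fb}. Under the parametrization~\eqref{t-x-y-param}, each of $t,x,y$ is a formal power series in $z$ with coefficients in $\qs[u,v]$, and since $t=z+O(z^2)$, the parametric form of~\eqref{eq:Fb} still determines a unique solution in $\qs[u,v][[z]]$ by extracting coefficients of $z^n$ inductively. So it suffices to check that the candidate defined by~\eqref{F-param} (i) lies in $\qs[u,v][[z]]$ and (ii) satisfies the parametric form of~\eqref{eq:Fb}.

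For (i) and for the symmetry statement, I would first rewrite the candidate in a cleaner form. Setting $\phi(w)=(1+w)/(1+zw)^{m+1}$ and $\psi(w)=(1+w)(1+zw)$, one has $x=\phi(u)$ and $y=\phi(v)$, and~\eqref{F-param} becomes
\[
yF(x,y)=\frac{\psi(u)\psi(v)(x-y)}{(u-v)(1-zuv)(1-z)^{m+2}}.
\]
Since $x-y$ vanishes on $u=v$, the apparent pole at $u=v$ cancels, so the right-hand side lies in $\qs[u,v][[z]]$. The symmetry in $x,y$ is now manifest: exchanging $u\leftrightarrow v$ also exchanges $x\leftrightarrow y$, and the above expression is invariant.

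For (ii), the specializations needed to interpret the operator $\Delta$ are immediate in parametric form. The condition $y=1$ corresponds to $v=0$ (because $\phi(0)=1$), and letting $v\to 0$ in the displayed formula gives
\[
F(x,1)=\frac{\psi(u)(\phi(u)-1)}{u(1-z)^{m+2}};
\]
similarly $F(1,y)$ is obtained by $u\to 0$. Since $x-1=\phi(u)-\phi(0)$, for any series $G$ expressed in the parametric variables,
\[
\Delta G(x,y)=\frac{G(u,v)-G(0,v)}{\phi(u)-1}.
\]

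The main obstacle is evaluating the iterated operator $(F(x,1)\cdot\Delta)^{(m)}F(x,y)$ in closed form. My plan would be to introduce $G^{(k)}:=(F(x,1)\cdot\Delta)^{(k)}F(x,y)$, conjecture a closed rational expression for $G^{(k)}$ analogous to the one for $F$ but with a $k$-dependent modification of the numerator, and prove it by induction on $k$: the base case $k=0$ is the formula for $F$ itself, and the inductive step is checking that $F(x,1)\cdot\Delta$ preserves the conjectured shape. Substituting $k=m$ then reduces~\eqref{eq:Fb} to a polynomial identity in $z,u,v$, which can be verified by clearing denominators and expanding. The closed form for $G^{(k)}$ would be guessed, as is customary for such equations, by computing the first few values of $m$ via the algorithm of~\cite{mbm-jehanne} and pattern-matching; this is where the real work lies, but once the ansatz is in place, both the induction and the final identity are routine.
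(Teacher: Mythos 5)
Your reduction to a rational-function identity in the parametric variables is exactly what the paper does, including the rewriting of $F(x,1)\cdot\Delta$ as the operator $H\mapsto\frac{(1+u)(1+zu)}{u(1-z)^{m+2}}\bigl(H(u,v)-H(0,v)\bigr)$, the observation that $y=1 \Leftrightarrow v=0$, the cancellation of the apparent $u=v$ pole, and the immediate $u\leftrightarrow v$ symmetry. Up to that point you and the paper are in step.

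The gap is in how you plan to evaluate the $m$-fold iterate. You propose to conjecture a single rational expression for $G^{(k)}=(F(x,1)\cdot\Delta)^{(k)}F(x,y)$ ``analogous to the one for $F$ but with a $k$-dependent modification of the numerator'' and prove it by induction. This is not what happens: the intermediate iterates do not have a simple rational closed form. Once the problem is reduced (as the paper does) to iterating the one-variable operator $\lambda h(u)=(1+u)(1+zu)\frac{h(u)-h(0)}{u}$ on $1/(1+zu)^m$, the resulting expressions (the paper's identity~\eqref{identity}) are Laurent polynomials in $w=1+zu$ given by a double sum of binomial coefficients, not a single fraction of the shape of~\eqref{F-param}. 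Pattern-matching a formula ``analogous to $F$'' would fail.

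What you are missing is the structural insight that makes the induction tractable: the paper's Lemma~\ref{lem:sym}, which shows that if $H(u,v)$ is of the specific symmetric shape~\eqref{sym-form}, then so is $\Lambda H$. A series of that shape is determined by its one-variable slice $H(u,0)$, and $(\Lambda H)(u,0)$ depends only on $H(u,0)$. This collapses the two-variable verification to a one-variable one (the case $v=0$ of~\eqref{final-id}), and in turn that case to the one-variable identity of Lemma~\ref{lem:id}. Even then the induction requires guessing the stronger statement~\eqref{identity} (the intermediate expression for each $1\le k\le m$), which is not ``analogous to $F$'' at all. So your ansatz for $G^{(k)}$ would not survive contact with the data; you would instead be led to discover, one way or another, the symmetry-preservation lemma that the paper uses. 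This is the one genuinely new idea in the proof, and your proposal does not contain it.
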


\noindent{\bf Remark.} This result was first  guessed for small
values of $m$. 
More precisely, we first guessed the values of $\frac{\partial ^iF}{\partial
    x}(1,1)$ for $0\le i \le m-1$, and then combined  these conjectured
  values with the functional equation to obtain conjectures for
  $F(x,1)$ and $F(x,y)$. 
Let us illustrate our guessing procedure on the case $m=1$. We
first consider the case $y=1$, where the equation reads 
\beq
  F(x,1)= x+xt F(x,1) \frac{F(x,1)-F(1,1)}{x-1}.
\label{eq-func-m1}
\eeq
Our first objective is to guess the value of $F(1,1)$. Using the above
equation, we easily compute, say, the 20 first coefficients
of $F(1,1)$.  Using the {\sc Maple} package {\tt gfun}~\cite{gfun}, we conjecture from this
list of coefficients that $f\equiv F(1,1)$ satisfies
$$
1-16\,t- \left( 1-20\,t \right) f- \left( 3\,t+8\,{t}^{2} \right) {f
}^{2}-3\,{t}^{2}{f}^{3}-{t}^{3}{f}^{4}=0.
$$
Using the package {\tt algcurves}, we find that the above equation
admits a rational parametrization, for instance
$$
t=z(1-z)^3, \quad f=F(1,1)=\frac{1-2z}{(1-z)^3}.
$$
This is the end of the ``guessing'' part\footnote{For a general value
  of $m$, one has to guess the series $\frac{\partial ^iF}{\partial
    x}(1,1)$ for $0\le i \le m-1$. All of them are found to be
  rational functions of $z$, when $t=z(1-z)^{m^2+2m}$.}. Assume the
above identity holds, and
replace $t$ and $F(1,1)$ in~\eqref{eq-func-m1} by their expressions in
terms of $z$. This gives an algebraic equation in $F(x,1)$, $x$ and
$z$. Again, the package {\tt algcurves} reveals that this equation,
seen as an equation in $F(x,1)$ and $x$, has a rational
parametrization, for instance
$$
x=\frac{1+u}{(1+zu)^2}, \quad F(x,1)= {\frac { \left( 1+u \right)  \left( 1-2\,z-{z}^{2}u \right) }{
 \left( 1+zu \right)  \left(1- z \right) ^{3}}}.
$$
Let us finally return to the functional equation defining $F(x,y)$:
$$
 F(x,y)= x+xyt F(x,1) \frac{F(x,y)-F(1,y)}{x-1}.
$$
In this equation, replace $t$, $x$ and $F(x,1)$ by their conjectured
expressions in terms of $z$ and $u$. This gives
\beq\label{m1-zu}
\left( 1+zu -z y \frac{ (1+u ) ^{2}}{u} \right)
 F( x, y) ={\frac {1+u}{1+zu}}-  z y \frac{ (1+u ) ^{2}}{u}F(1,y).
\eeq
We conclude by applying to this equation the \emm kernel method, (see,
e.g.~\cite{hexacephale,bousquet-petkovsek-1,prodinger}): let $U\equiv
U(z;y)$ be the unique \fps\ in $z$ (with coefficients in $\qs[y]$)
satisfying
$$
 U= z y { (1+U ) ^{2}}-zU^2.
$$
Equivalently, 
$$
U= z\frac{1+v}{1-2z-z^2v}, \quad \hbox{with} \quad
y=\frac{1+v}{(1+zv)^2}.
$$
Setting $u=U$ in~\eqref{m1-zu} cancels the left-hand side, and thus
the right-hand side, giving
$$
yF(1,y)= {\frac { \left( 1+v \right)  \left( 1-2\,z-{z}^{2}v \right) }{
\left( 1+zv \right)  \left(1- z \right) ^{3}}}.
$$
A conjecture for the trivariate series $F(t;x,y)$ follows,
using~\eqref{m1-zu}. This conjecture coincides with~\eqref{F-param}. \qee

\medskip
Before we prove Theorem~\ref{thm:series}, let us give a closed form expression
for the number of intervals in  $\cT_{n}^{(m)}$.

\begin{Corollary}\label{prop:number}
Let $m\ge 1$ and $n\ge 1$.  The number of intervals in the Tamari
lattice $\cT_{n}^{(m)}$ is 
$$
f_n^{(m)}=\frac {\mb}{n(mn+1)} {n\mb^2 +m\choose n-1},
$$
where we denote $\mb=m+1$. For $2\le i\le n+1$, the number of intervals in which the bottom path
has $i$ contacts with the $x$-axis is  
\beq\label{enum-double}
f_{n,i}^{(m)}=\frac{(n\mb^2-i\mb+m)! (i\mb-m)!}
{(n\mb^2-n-im+2m)! (n-i+1)! (mi)! (i-2)!}P_m(n,i),
\eeq
where 
$P_m(n,i)$ is a polynomial in $n$ and $i$. In particular,
$$
P_1(n,i)=2, \quad P_2(n,i)=6(33\,in-9\,{i}^{2}+15\,i-2\,n-2).
$$
 More generally,
\begin{multline}\label{pol}
i(i-1) P_m(n,i)=- \mb ! (m-1)! (n-i+1){{i\mb} \choose m}  {{nm(m+2)-im+2m}\choose m-1}
\\
+ \sum_{k=1}^{m-2} k k!^2 (m-k-2)! (m-k-1)! ((i+1)m \mb +2\mb +k)(n-i)(n-i+1) \times\\
{{i\mb -k-1} \choose m-k-1}{im \choose k}
{{n\mb^2-i\mb +m+k}\choose k} {{nm(m+2) -im +2m} \choose  {m-k-2}}
\\+ m!^2{im \choose m-1} \left( i { n\mb^2-i\mb +2m\choose m}
- \frac{(m-1)(i\mb +2)(n-i+1)}m {n\mb^2-i\mb +2m-1\choose m-1}\right).
\end{multline}
\end{Corollary}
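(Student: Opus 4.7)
The plan is to extract coefficients from the rational parametrization of $F^{(m)}(t;x,y)$ given by Theorem~\ref{thm:series}, specialized to $y=1$. Setting $v=0$ in~\eqref{F-param} and simplifying using $x = (1+u)/(1+zu)^{m+1}$, one obtains
$$F^{(m)}(t;x,1) = \frac{(x-1)(1+u)(1+zu)}{u(1-z)^{m+2}},$$
where $t = z(1-z)^{\mb^2 - 1}$ and $x = (1+u)/(1+zu)^{\mb}$ (with $\mb = m+1$). Since $x-1 = u\cdot A(u,z)$ for a power series $A$ with $A(0,z) = 1 - \mb z$, the right-hand side is a bona fide power series in $z$ and $u$.

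For the formula for $f_n^{(m)}$, I set $u=0$ (equivalently $x=1$). Cancelling the $u$ in the denominator against the factor $(x-1)$ and evaluating gives $F^{(m)}(t;1,1) = (1-\mb z)/(1-z)^{\mb+1}$. Since $t = z(1-z)^{\mb^2-1}$, Lagrange inversion yields
$$f_n^{(m)} = \frac{1}{n}[z^{n-1}]\,H'(z)\,(1-z)^{-n(\mb^2-1)}$$
with $H(z)= (1-\mb z)/(1-z)^{\mb+1}$. A direct computation gives $H'(z) = (1-\mb^2 z)/(1-z)^{\mb+2}$, so
$$f_n^{(m)} = \frac{1}{n}\!\left(\binom{n\mb^2+\mb}{n-1} - \mb^2\binom{n\mb^2+\mb-1}{n-2}\right).$$
Factoring out the common factorial and using the elementary identity $(\mb+1)(n\mb-n+1) = n\mb^2+\mb-n+1$ rewrites this as $\frac{\mb}{n(mn+1)}\binom{n\mb^2+m}{n-1}$, as claimed.

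For the refined count $f_{n,i}^{(m)}$, I use double Lagrange inversion. First, invert the change of variable $x = (1+u)/(1+zu)^{\mb}$: writing $s = x-1 = u\cdot A(u,z)$ with $A(0,z) = 1-\mb z$ invertible in $\qs[[z]]$, Lagrange inversion expresses $[s^j]$ of $F^{(m)}(t;x,1)$ as a residue in $u$. Combined with the binomial expansion $x^i = (1+s)^i$, this yields $[x^i]F^{(m)}(t;x,1)$ as a finite linear combination of rational functions of $z$. A second application of Lagrange inversion in $z$ then extracts $[t^n]$ and produces explicit binomial expressions in $n$, $i$, $m$.

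The main obstacle is the algebraic bookkeeping needed to bring the resulting double sum of binomials into the compact form~\eqref{enum-double}--\eqref{pol}. The natural splitting $(1+u)(1+zu)/u = u^{-1} + (1+z) + zu$, each term multiplied by $(x-1)$, yields three families of binomial summands whose recombination produces both the boundary terms of~\eqref{pol} and the sum indexed by $k$. Verifying the explicit low cases $P_1(n,i) = 2$ and $P_2(n,i) = 6(33in - 9i^2 + 15i - 2n - 2)$ provides a useful sanity check of the recombination.
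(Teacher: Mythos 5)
Your computation of $f_n^{(m)}$ is correct and the same as the paper's: set $u=v=0$ in \eqref{F-param} to obtain $F(t;1,1)=(1-\mb z)/(1-z)^{\mb+1}$ with $t=z(1-z)^{\mb^2-1}$, apply Lagrange inversion in $z$, and simplify. That part is fine.

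For $f_{n,i}^{(m)}$ your broad plan (Lagrange inversion in $u$ to extract $[x^i]$, then in $z$ to extract $[t^n]$) is the paper's plan, but the $u$-inversion you set up is at the wrong base point, and the conversion step you sketch does not do what you claim. Writing $s=x-1=uA(u,z)$ inverts around $u=0$, where $x=1$; this produces $[s^j]F=[(x-1)^j]F$. Passing to $[x^i]F$ then requires the $z$-adically convergent but \emph{infinite} sum $\sum_{j\ge i}(-1)^{j-i}\binom{j}{i}[s^j]F$, not a finite linear combination, and the identity $x^i=(1+s)^i$ expands $x^i$ in powers of $s$ --- the wrong direction for that conversion. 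What is needed, and what the paper does, is to invert at $u=-1$: putting $v=1+u$ turns the parametrization into $x=v/(1-z+zv)^{\mb}$, a genuine Lagrange setup with $x=0\Leftrightarrow v=0$, giving directly, for $i\ge 1$ and $p\ge -m$,
$$[x^i](1+zu)^p=\frac{p}{i}\binom{i\mb+p-1}{i-1}\,z^i(1-z)^{im+p}.$$
The paper applies this after a partial-fraction expansion of $(1-z)^{\mb+1}F(t;x,1)$ in powers of $1+zu$; your three-way split $(1+u)(1+zu)/u=u^{-1}+(1+z)+zu$ is a first step toward that, but the $u^{-1}$ piece multiplied by $x-1$ still has to be re-expanded in $(1+zu)^{-k}$, and one must observe that the $u^{-1}$ poles cancel so that the formula above (valid only for $p\ge -m$) suffices. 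Finally, the ``algebraic bookkeeping'' you defer is precisely the derivation of \eqref{pol} and the verification that $P_m(n,i)$ is a polynomial (i.e.\ that $i(i-1)$ divides the right side of \eqref{pol}), both of which the paper carries out explicitly; as written your argument for the refined formula is an outline, not a proof.
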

\noindent{\bf Remarks}\\
\noindent 1. The case $m=1$ of~\eqref{enum-double} reads
$$
f_{n,i}^{(1)}= \frac {(i-1)(4n-2i +1)!}{(3n-i+2)!(n-i+1)!}{2i\choose i}.
$$
This result can also  obtained using Bernardi and Bonichon's
bijection between intervals of size $n$ in the (usual) Tamari lattice and planar
3-connected triangulations having $n+3$
vertices~\cite{BeBo07}. Indeed, through this 
bijection, the number of contacts in the lower path of
the interval becomes the degree of the root-vertex of the
triangulation, minus one~\cite[Def.~3.2]{BeBo07}. The above result is
thus equivalent to a result of Brown counting triangulations by
the number of vertices and the degree of the
root-vertex~\cite[Eq.~(4.7)]{brown-triang}. 

\medskip
\noindent
2. Our expression of  $P_m$ is not
illuminating, but we have given it to prove that $P_m$ is indeed a
polynomial. If we fix $i$ rather than $m$, then, 
experimentally,
 $P_m(n,i)$ seems to be a sum of two hypergeometric terms in $m$ and
 $n$. More precisely, it appears that 
\begin{multline*}
  P_m(n,i)=\frac {m\mb! (im)!}{  (i\mb-m)! {n\choose {i-1}}} \times\\
\left( \mb R_i(m,n)  {n\mb ^2-(i-2)\mb -1 \choose \mb}
+Q_i(m,n) {nm(m+2)-(i-2)m \choose m}\right),
\end{multline*}
where $R_i$ and $Q_i$ are two polynomials in $m$ and $n$. 
This holds at least for small values of $i$. 

\medskip
\noindent
3. The coefficients of the trivariate series $F(t;x,y)$ do not seem to
have small prime factors, even when $m=1$.

\bigskip

\noindent \emph{Proof of Theorem~{\rm\ref{thm:series}.}}
 The functional equation of
Proposition~\ref{prop:eq} defines a unique formal power series in
$t$ (think of extracting inductively the coefficient of $t^n$ in
$F(t;x,y)$). The coefficients of this series are polynomials in $x$
and $y$.  The parametrized expression of  $ F(t;x,y)$ given in
Theorem~\ref{thm:series}  also defines $ F(t;x,y)$ uniquely as a power
series in $t$, because~\eqref{t-x-y-param}
defines $z$, $u$ and $v$ uniquely as formal power series in $t$ (with
coefficients in $\qs$, $\qs[x]$ and $\qs[y]$ respectively). 
Thus it suffices to prove  that the series $F(t;x,y)$
of Theorem~\ref{thm:series} satisfies the equation of
Proposition~\ref{prop:eq}. 

If $G(t;x,y)\equiv G(x,y)$ is any series in $\qs[x,y][[t]]$, then
performing the change of variables~\eqref{t-x-y-param} gives
  $G(t;x,y)= H(z;u,v)$, where 
$$
H(z;u,v)\equiv H(u,v)= G\left(z(1-z)^{m^2+2m}; \frac
  {1+u}{(1+zu)^{m+1}}, \frac {1+v}{(1+zv)^{m+1}}\right).
$$
Moreover, if $F(x,y)$ is given by~\eqref{F-param}, then 
$$
F(x,1)= \frac{(1+u)(1+zu)}{u(1-z)^{m+2}}
\left( \frac{1+u}{(1+zu)^{m+1}}-1\right),
$$
and
$$
F(x,1)\Delta G(x,y)
=
\frac{(1+u)(1+zu)}{(1-z)^{m+2}} \frac{H(u,v)-H(0,v)}u.
$$
Let us define an operator $ \Lambda$ as follows: for any series $H(z;u,v)\in \qs[u,v][[z]]$,
\beq\label{newop}
\Lambda H(z;u,v):= (1+u)(1+zu) \frac{H(z;u,v)-H(z;0,v)}u.
\eeq
Then the series $F(t;x,y)$ of Theorem~\ref{thm:series} satisfies the equation of
Proposition~\ref{prop:eq} if and only if   
 the series $H(u,v)$ obtained by performing the change of
variables~\eqref{t-x-y-param} in  $y(1-z)^{m+2}F(x,y)$, that is,
\beq\label{H-def}
H(u,v)=\frac{(1+u)(1+zu)(1+v)(1+zv)}{(u-v)(1-zuv)}
\left( \frac{1+u}{(1+zu)^{m+1}}-\frac{1+v}{(1+zv)^{m+1}}\right).
\eeq
satisfies
\beq
  \label{final-id}
z\Lambda^{(m)} H(u,v)
=
\frac{(1+zu)^{m+1}(1+zv)^{m+1}}{(1+u)(1+v)} H(u,v)
-(1-z)^{m+2}.
\eeq
Hence we simply have to prove an identity on rational functions. 
Observe that both $H(u,v)$ and the conjectured expression of
$\Lambda^{(m)} H(u,v)$ are symmetric in $u$ and $v$. More generally,
computing (with the help of {\sc Maple}) the rational functions
$\Lambda^{(k)} H(u,v)$  for a few values of $m$ and $k$ suggests that
these fractions are \emm always, symmetric in $u$ and $v$. This
observation raises the following question: Given a symmetric function
$H(u,v)$, when is $\Lambda H(u,v)$ also symmetric? This leads to the 
following lemma, which will reduce the proof of~\eqref{final-id} to
the case $v=0$.

\begin{Lemma}\label{lem:sym}
  Let $H(z;u,v)\equiv H(u,v)$ be a series of $\qs[u,v][[z]]$,
  symmetric in $u$ and $v$. Let $\Lambda$ be the operator defined by~\eqref{newop}, and denote $H_1(u,v):=
\Lambda H(u,v)$. Then  $H_1(u,v)$ is  symmetric in $u$ and $v$ if
and only if $H$ satisfies
\beq\label{sym-form}
H(u,v)= \frac { u(1+v)(1+zv)
  H(u,0)-v(1+u)(1+zu)H(v,0)}{(u-v)(1-zuv)}.
\eeq
If this holds,  then $H_1(u,v)$ also
 satisfies~\eqref{sym-form} (with $H$ replaced by $H_1$).
By induction, the same holds for $H_k(u,v):=\Lambda^{(k)} H(u,v)$.
\end{Lemma}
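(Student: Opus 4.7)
The plan is to prove the lemma by direct rational-function computation, relying on two elementary algebraic identities that recur throughout:
\begin{align*}
v(1+u)(1+zu) - u(1+v)(1+zv) &= -(u-v)(1-zuv), \\
v(1+u)(1+zu) + (u-v)(1-zuv) &= u(1+v)(1+zv).
\end{align*}
Both are verified by elementary expansion.

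For the equivalence, I would first write out $H_1(u,v)-H_1(v,u)$ from the definition \eqref{newop}, using the symmetry of $H$ (so that $H(0,v)=H(v,0)$ and $H(0,u)=H(u,0)$). After clearing denominators, the vanishing of this difference reads
\[
v(1+u)(1+zu)\bigl[H(u,v)-H(v,0)\bigr] = u(1+v)(1+zv)\bigl[H(u,v)-H(u,0)\bigr].
\]
Grouping the terms involving $H(u,v)$ and invoking the first identity above lets me solve explicitly for $H(u,v)$, and the resulting expression is exactly \eqref{sym-form}. Reversing the chain of equalities shows that \eqref{sym-form} is also sufficient, which completes the equivalence.

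For the second assertion, assume $H$ satisfies \eqref{sym-form}. To evaluate $H_1(u,v)$, I need to simplify $H(u,v) - H(v,0)$. Bringing $H(v,0)$ over the common denominator $(u-v)(1-zuv)$ and then applying the second identity above collapses the numerator to $u(1+v)(1+zv)\bigl[H(u,0)-H(v,0)\bigr]$. Therefore
\[
H_1(u,v)= \frac{(1+u)(1+zu)(1+v)(1+zv)\bigl(H(u,0)-H(v,0)\bigr)}{(u-v)(1-zuv)},
\]
which is manifestly symmetric in $u$ and $v$, since both numerator and denominator change sign under the swap. Specialising at $v=0$ gives $H_1(u,0)=(1+u)(1+zu)\bigl(H(u,0)-H(0,0)\bigr)/u$, and inserting this into the right-hand side of \eqref{sym-form} with $H$ replaced by $H_1$ reproduces exactly the closed form displayed above, the two copies of $H(0,0)$ cancelling. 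Thus $H_1$ is itself symmetric and satisfies \eqref{sym-form}, so the inductive claim about $H_k=\Lambda^{(k)}H$ follows by immediate iteration on $k$.

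There is no real conceptual obstacle: the whole argument is a short manipulation of rational functions in $u$, $v$ and $z$. The substantive content is the second algebraic identity, which is precisely what forces the operator $\Lambda$ to stabilise the family of symmetric series of the shape \eqref{sym-form}. The only point requiring a little care is tracking the signs of the subtractions and the cancellation of $H(0,0)$ when verifying that $H_1$ inherits the structural identity from $H$.
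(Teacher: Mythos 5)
Your proof is correct and carries out exactly the ``straightforward calculation'' that the paper invokes without spelling out. The two elementary identities you isolate,
$v(1+u)(1+zu)-u(1+v)(1+zv)=-(u-v)(1-zuv)$ and its rearrangement $v(1+u)(1+zu)+(u-v)(1-zuv)=u(1+v)(1+zv)$, are precisely what make the equivalence and the stability under $\Lambda$ collapse cleanly; the closed form $H_1(u,v)=\dfrac{(1+u)(1+zu)(1+v)(1+zv)\,(H(u,0)-H(v,0))}{(u-v)(1-zuv)}$ is the right intermediate output, and your cancellation of the two $H(0,0)$ terms when re-verifying~\eqref{sym-form} for $H_1$ is the key (and only delicate) step. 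This matches the approach the authors intend.
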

\noindent The proof is a straightforward calculation.\qed

\medskip

Note that a series $H$ satisfying~\eqref{sym-form} is characterized by
the value of $H(u,0)$.   The series $H(u,v)$ given by~\eqref{H-def}
satisfies~\eqref{sym-form}, with
$$
H(u,0)= \frac{(1+u)(1+zu)}u\left(
{ \frac{1+u}{(1+zu)^{m+1}}-1} \right) = \Lambda \left(
  \frac{1+u}{(1+zu)^{m+1}}\right).
$$
Moreover, one easily checks that the right-hand side of~\eqref{final-id}  also
satisfies~\eqref{sym-form}, as expected from
Lemma~\ref{lem:sym}. Thus it suffices to prove the case $v=0$
of~\eqref{final-id}, namely
\beq\label{final-id-simple}
z\Lambda^{(m+1)} \left( \frac{1+u}{(1+zu)^{m+1}}\right)
=
\frac{(1+u)(1+zu)}{u}
\left(1- \frac{(1+zu)^{m+1}}{1+u}\right)
-(1-z)^{m+2}.
\eeq
This will be a simple consequence of the following lemma.

\begin{Lemma}\label{lem:id}
Let $\Lambda$ be the operator defined by~\eqref{newop}. For 
$m\ge 1$,
$$
\Lambda^{(m)} \left( \frac 1{(1+zu)^{m}}\right)=(1-z)^m -(1+zu)^m.
$$
\end{Lemma}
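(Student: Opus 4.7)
The plan is to prove the lemma by induction on $m$. The base case $m=1$ is a direct computation: writing $q=1+zu$,
\[
\Lambda\!\left(\tfrac{1}{1+zu}\right) = (1+u)(1+zu)\cdot\tfrac{1/(1+zu)-1}{u} = -z(1+u) = (1-z) - (1+zu).
\]

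For the inductive step, assume the lemma holds for all indices below $m$. The argument rests on two auxiliary identities, both consequences of the elementary observation that $\Lambda(u f)=(1+u)q f$ whenever $uf$ vanishes at $u=0$ (automatic for analytic $f$). First, $1/q^m - 1/q^{m-1} = -zu/q^m$, so applying $\Lambda$ to this difference and using the observation yields
\[
\Lambda(1/q^m) = \Lambda(1/q^{m-1}) - z(1+u)/q^{m-1}.
\]
Applying $\Lambda^{(m-1)}$ to both sides, I obtain
\[
\Lambda^{(m)}(1/q^m) = \Lambda^{(m)}(1/q^{m-1}) - z\,A_{m-1}, \qquad A_{k} := \Lambda^{(k)}\!\left(\tfrac{1+u}{q^{k}}\right).
\]
Second, splitting $(1+u)/q^k = 1/q^k + u/q^k$ gives $\Lambda((1+u)/q^k) = \Lambda(1/q^k) + (1+u)/q^{k-1}$, whence by iteration
\[
A_k = \Lambda^{(k)}(1/q^k) + A_{k-1}, \quad A_0 = 1+u, \qquad\text{so}\qquad A_{m-1} = (1+u) + \sum_{k=1}^{m-1}\left[(1-z)^k - q^k\right],
\]
where the induction hypothesis has been used to evaluate each $\Lambda^{(k)}(1/q^k)$ for $k<m$.

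The remaining term $\Lambda^{(m)}(1/q^{m-1})$ equals $\Lambda\bigl((1-z)^{m-1}-q^{m-1}\bigr) = -\Lambda(q^{m-1})$ by the induction hypothesis, and a parallel direct computation (via the same geometric factorization used in the base case) gives $\Lambda(q^n) = z(1+u)\sum_{k=1}^n q^k$. Substituting both expressions back into the formula for $\Lambda^{(m)}(1/q^m)$, I then collapse the two geometric sums via the telescopes $-zu\sum_{k=1}^{m-1} q^k = q - q^m$ (since $zu = q-1$) and $-z\sum_{k=1}^{m-1}(1-z)^k = (1-z)^m - (1-z)$; combined with the term $-z(1+u) = 1 - z - q$, everything cancels down to $(1-z)^m - q^m$.

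The main obstacle is this final algebraic simplification: although conceptually elementary, it requires careful bookkeeping of the factors $(1+u)$, $z$, and $zu = q-1$. Once the two recurrences above are set up correctly, the remaining computation is routine, and the cancellation to $(1-z)^m - q^m$ is essentially forced by the telescoping structure of the geometric sums.
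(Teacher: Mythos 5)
Your proof is correct, and it takes a genuinely different — and arguably cleaner — route than either of the paper's two arguments. The paper's main proof first guesses the much more general identity~\eqref{identity} for $\Lambda^{(k)}(1/w^m)$ with $1\le k\le m$ (a two-fold binomial sum), then verifies it by induction on $k$ using the elementary rules~\eqref{lambda-elem}; the appendix gives an alternative constructive proof via generating functions $P(t;s),N(t;s)$ and the kernel method, followed by Lagrange inversion and a Zeilberger-style hypergeometric identity. You instead do a strong induction on $m$ directly, built on the single observation $\Lambda(uf)=(1+u)qf$ (for $f$ regular at $u=0$), which yields the two recurrences $\Lambda(1/q^m)=\Lambda(1/q^{m-1})-z(1+u)/q^{m-1}$ and $A_k=\Lambda^{(k)}(1/q^k)+A_{k-1}$; the final cancellation is a pair of telescoping geometric sums driven by $zu=q-1$. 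What your approach buys is that it proves exactly the statement needed, with no guessing of the intermediate formula~\eqref{identity} and no generating-function or kernel-method machinery; what the paper's body proof buys is an explicit closed form for every $\Lambda^{(k)}(1/w^m)$, not just $k=m$, which is more information than the lemma asks for. Your derivation is complete: the base case, both auxiliary recurrences, the evaluation $\Lambda(q^n)=z(1+u)\sum_{k=1}^n q^k$, and the final telescoping simplification all check out.
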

\begin{proof}
We will actually prove a more general identity. Let  $1\le k \le m$,
and   denote $w=1+zu$. Then 
  \begin{multline}\label{identity}
    \Lambda^{(k)} \left( \frac1{(1+  zu)^{m}}\right)
=
 \frac{(1-z)^k} {w^{m-k}}-\sum_{i=k}^{m-1} \sum_{j=1}^k \frac {(-1)^{k+j} z^{k-j+1}}{w^{m-i-1}} 
  {k\choose j-1} {i-j+1\choose k-j}
 \\ 
+\sum_{i=1}^{k-1}\sum_{j=1}^i (-1)^{j-1}z^jw^{k-i} {i-1 \choose
j-1} {m-k+j-1 \choose j}-w^k.
\end{multline}
The case $k=m$ is the identity of Lemma~\ref{lem:id}.
In order to prove~\eqref{identity}, we   need an expression of
$\Lambda( w^p)$, for all $p\in \zs$. 
Using the definition~\eqref{newop} of $\Lambda$, one obtains, for $p\ge 1$,
\beq\label{lambda-elem}
\left\{
\begin{array}{lll}
 \displaystyle  \Lambda \left( \frac 1 {w^p}\right) &=& \displaystyle 
\frac{1-z}{w^{p-1}} - z\sum_{a=0}^{p-2} \frac 1{w^a} -w,
\\
\displaystyle \Lambda(1)&=&0,
\\
\displaystyle \Lambda \left(  {w^p}\right) &=& \displaystyle (z-1) w+z \sum_{a=2}^p w^a +w^{p+1}.
\end{array}\right.
\eeq

We now prove~\eqref{identity}, by induction on $k\ge 1$.  For $k=1$, \eqref{identity} coincides with the expression of
 $\Lambda(1/w^p)$ given above (with $p$ replaced by $m$).
Now let $1\le k<m$. Apply $\Lambda$
to~\eqref{identity}, use~\eqref{lambda-elem} to express the terms
$\Lambda(w^p)$ that appear, and then check that the coefficient of
$w^az^b$ is what it is expected to be, for all values of $a$ and
$b$. The details are a bit tedious, but elementary. One needs to apply
a few times the following identity:
$$
\sum_{r=r_1}^{r_2} {r-a \choose b}
={\frac { \left(r_2+1- a-b \right) }{b+1}}{r_2+1-a\choose b}-{\frac { \left(r_1- a-b \right) }{b+1}}{r_1-a\choose b}
.
$$

We give in  the appendix a constructive proof of Lemma~\ref{lem:id},
which does not require to guess the more general
identity~\eqref{identity}.
It is also possible to derive~\eqref{identity} combinatorially
from~\eqref{lambda-elem} using one-dimensional lattice paths (in this
setting,~\eqref{lambda-elem} describes what steps are allowed if one
starts at position $p$, for any $p \in \zs$).
\end{proof}
Let us now return to the proof of~\eqref{final-id-simple}. We write
$$
z \frac{1+u}{(1+uz)^{m+1}}= \frac 1 {(1+uz)^m}+\frac
{z-1}{(1+uz)^{m+1}}.
$$
Thus 
\begin{eqnarray*}
  z \Lambda^{(m+1)}
\left(\frac{1+u}{(1+uz)^{m+1}}\right)&= &
\Lambda\left( \Lambda^{(m)} \left(\frac 1 {(1+u z)^m}\right)\right)
+(z-1) \Lambda^{(m+1)} \left( \frac 1{(1+uz)^{m+1}}\right)
\\
&=&
\Lambda\left((1-z)^m -(1+uz)^m\right)+(z-1) \left((1-z)^{m+1}
  -(1+uz)^{m+1}\right) 
\end{eqnarray*}
by Lemma~\ref{lem:id}. Eq.~\eqref{final-id-simple} follows, and
Theorem~\ref{thm:series} is proved.
\qed

\bigskip
\noindent {\emph {Proof of Corollary~{\rm\ref{prop:number}.}}}
\noindent
Let us first determine the coefficients of $F(t;1,1)$. By letting $u$
and $v$ tend to $0$ in the expression of $yF(t;x,y)$, we obtain
$$
F(t;1,1)=\frac{1-(m+1)z}{(1-z)^{m+2}},
$$
where $t=z(1-z)^{m^2+2m}$. The Lagrange inversion formula gives
$$
[t^n] F(t;1,1)= \frac 1 n [t^{n-1 }]
  \frac{1-(m+1)^2t}{(1-t)^{nm(m+2)+m+3}},
$$
and the expression of $f_n^{(m)}$ follows after an elementary
coefficient extraction.

\medskip
We now wish to express the coefficient of $t^nx^i$ in
$$
F(t;x,1)= \frac{(1+u)(1+zu)}{u(1-z)^{m+2}}
\left( \frac{1+u}{(1+zu)^{m+1}}-1\right).
$$
We will expand this series, first in $x$, then in $t$, applying  the Lagrange 
 inversion formula first to $u$, then to $z$.
We first expand $(1-z)^{m+2}F(t;x,1)$ in partial fractions of $u$:
$$
(1-z)^{m+2}F(t;x,1)=
-z\chi_{m>1}-(1+zu)-\sum_{k=1}^{m-2}
\frac{z}{(1+uz)^k}+\frac{1-z^2}{z(1+uz)^{m-1}} 
-\frac{(1-z)^2}{z(1+uz)^{m}}.
$$
By the Lagrange inversion formula, applied to $u$, we have, for $i\ge
1$ and $p\ge -m$, 
$$
[x^i] (1+zu)^p= \frac p i {{i\mb+p-1}\choose {i-1}} z^i(1-z)^{im+p},
$$
with $\mb=m+1$. Hence, for $i\ge 1$,
\begin{multline*}
 i [x^i] F(t;x,1)=- {i\mb\choose i-1} z^i (1-z)^{(i-1)m-1}
+\sum_{k=1}^{m-2}k {{i\mb-k-1}\choose {i-1}} z^{i+1} (1-z)^{(i-1)m-k-2}\\
- (m-1) {{i\mb-m}\choose i-1}z^{i-1}(1+z) (1-z)^{(i-2)m}
+m{{(i-1)\mb}\choose {i-1}} z^{i-1}(1-z)^{(i-2)m}.
\end{multline*}
We rewrite the above line as
$$
{{i\mb-m}\choose {i-1}}\left( \frac i{i\mb-m} z^{i-1} (1-z)^{(i-2)m}
-(m-1) z^i  (1-z)^{(i-2)m}\right).
$$
Recall that $z=\frac t{(1-z)^{m^2+2m}}$. Hence, for $i\ge 1$,
\begin{multline*}
i  [x^it^n] F(t;x,1)=- {i\mb \choose i-1} [t^{n-i}]
  \frac 1{(1-z)^{\mb (im+1)}}
\\+\sum_{k=1}^{m-2} k
  {{i\mb-k-1}\choose {i-1}}[t^{n-i-1}] \frac
  1{(1-z)^{(i+1)m\mb +2\mb+k}}
\\
+{{i\mb-m}\choose i-1} \left( 
\frac i{i\mb-m} [t^{n-i+1}]\frac 1{(1-z)^{m(i\mb-m)}}
- (m-1) [t^{n-i}] \frac 1{(1-z)^{m(i\mb +2)}}
\right).
\end{multline*}
By the Lagrange inversion formula, applied to $z$, we have, for $p\ge
1$ and $n\ge 1$,
$$
[t^n] \frac 1{(1-z)^p}=\frac p n {n\mb^2+p-1\choose {n-1}}.
$$
This formula actually holds for  $n =0$ if we write it as
$$
[t^n] \frac 1{(1-z)^p}=\frac{p\,(n\mb^2+p-1)! }{n!\,(n\mb^2 -n +p)!},
$$
and actually for $n<0$ as well with the convention ${a \choose {n-1
  }}=0$ if $n<0$. With this convention, we have,  for $ 1\le i \le n+1$,
\begin{multline*}
 i [x^it^n] F(t;x,1)=-\frac{\mb (im+1)}{n-i} {i\mb\choose i-1}
  {{n\mb^2-i\mb+m}\choose {n-i-1}}
\\+\sum_{k=1}^{m-2} k\frac{(i+1)m\mb+2\mb+k}{n-i-1}
{{i\mb-k-1}\choose {i-1}}
{{n\mb^2-i\mb+m+k}\choose {n-i-2}}
\\+ m{{i\mb-m}\choose {i-1}} \left(
  \frac{i}{n-i+1}{{n\mb^2-i\mb+2m}\choose {n-i}}
- (m-1) \frac {i\mb+2}{n-i}{{ n\mb^2-i\mb+2m-1}\choose n-i-1}\right).
\end{multline*}
This gives the expression~\eqref{enum-double} of $f_{n,i}^{(m)}$, with
$P_m(n,i)$ given by~\eqref{pol}. Clearly, $i(i-1) P_m(n,i)$ is a
polynomial in $n$ and $i$, but  we still have to prove that it is
divisible by $i(i-1)$.

For $m\ge 1$ and $1\le k\le m-2$, the polynomials ${i\mb \choose m}$ and $
{im \choose k}$ are  divisible by $i$. The next-to-last term
of~\eqref{pol} contains an explicit factor $i$. The last term vanishes
if $m=1$, and otherwise contains a factor  ${im  \choose m-1}$, which
is a multiple of $i$. Hence each term of~\eqref{pol} is divisible by
$i$.

Finally, the right-hand side of~\eqref{pol} is easily evaluated to be
0 when $i=1$, using the {\tt sum} function of {\sc Maple.}
\qed

\section{Final comments}
\label{sec:final}
\noindent {\bf Bijective proofs?}
Given the simplicity of the numbers~\eqref{number}, it is natural to ask
about a bijective enumeration of $m$-Tamari intervals. A related
question would be  to extend  the bijection of~\cite{BeBo07}
(which transforms 1-Tamari 
intervals into triangulations)
into a bijection between $m$-Tamari  intervals  and
certain maps (or related structures, like \emm balanced trees, or
\emm mobiles,~\cite{Sch97,bouttier-mobiles}).  Counting these
structures in a bijective way (as is done in~\cite{poulalhon-schaeffer-triang} for triangulations)
would then provide a bijective proof  of~\eqref{number}. 

\medskip
\noindent{\bf Symmetry.}
The fact that the joint distribution of the number of non-initial contacts of the
lower path and the initial rise of the upper path is symmetric remains
a combinatorial mystery to us, even when $m=1$. What \emm is, easy to see
is that the joint distribution
of the number of non-initial contacts of the 
lower path and the \emm final descent, of the upper path is symmetric.
Indeed, there exists a simple involution on Dyck paths that
reverses the Tamari order and exchanges these two parameters: If
we consider Dyck paths as postorder encodings of binary trees,  this
involution amounts to a simple reflection of trees.
Via the bijection of~\cite{BeBo07}, these two parameters  correspond to the degrees of two vertices of
the root-face of the triangulation~\cite[Def.~3.2]{BeBo07}, so that
the symmetry is also clear in this setting.

\medskip
\noindent{\bf A $q$-analogue of the functional equation.} As described in the introduction, the numbers $f_n^{(m)}$ are
conjectured to give the dimension  of certain polynomial rings $\DR_{3
  ,n}^{m \ \varepsilon}$. These rings are tri-graded (with respect to
the sets of variables 
$\{x_i\}$, $\{y_i\}$ and $\{z_i\}$), 
 and it is conjectured~\cite{bergeron-preville}
that the dimension of the homogeneous component in the $x_i$'s of degree $k$ 
is the number of intervals $[P,Q]$ in $\cT_{n}^{(m)}$ such that the
longest chain from $P$ to $Q$, in the Tamari order, has length
$k$. One can recycle the recursive description of intervals described
in Section~\ref{sec:rec} to generalize the functional equation of
Proposition~\ref{prop:eq}, taking into account (with a new variable $q$) this
distance. Eq.~\eqref{eq:Fb} remains valid, upon defining the operator
$\Delta$ by
$$
\Delta S(x)=\frac{S(qx)-S(1)}{qx-1}.
$$
The coefficient of $t^n$ in the series $F(t,q;x,y)$ does not seem to
factor, even when $x=y=1$. The coefficients of the bivariate series
$F(t,q;1,1)$ have large prime factors.

\medskip
\noindent{\bf More on $m$-Tamari lattices?}
We do not know of any
simple description of the $m$-Tamari lattice in terms of rotations in
$m+1$-ary trees (which are equinumerous with $m$-Dyck paths). 
A rotation for ternary trees is defined 
in~\cite{pallo}, but does not give a lattice.
However, as noted by the referee, if we interpret $m$-ballot paths as
the \emm prefix, (rather than postfix) code of an $m+1$-ary tree, the
covering relation can be described quite simply. One first chooses a leaf $\ell$
that is followed (in prefix order) by an internal node  $v$. Then,
denoting by $T_0, \ldots, T_m$ the $m+1$ subtrees attached to $v$, from left to
right, we insert $v$ and its first $m$ subtrees 
in place of the leaf $\ell$, which becomes the rightmost child of $v$.
 The rightmost subtree of $v$, $T_m$,
finally takes the former place of $v$ (Figure~\ref{fig:m-flip}).
\figeps{9}{m-flip}{The covering relation of Figure~\ref{fig:push_mWalk} translated
  in terms of  ternary trees.}

More generally, it may be worth exploring analogues for the  $m$-Tamari
lattices of the numerous questions that have been studied for the
usual Tamari lattice. To mention only one, what is the \emm diameter, of the
$m$-Tamari lattice, that is,  the maximal distance between two
$m$-Dyck paths in the Hasse diagram? When $m=1$, it is known to be
$2n-6$ for $n$ large enough, but the proof is as complicated as the
formula is simple~\cite{dehornoy,sleator}. 

\spacebreak

\bigskip
\noindent{\bf Acknowledgements.} We are grateful to François Bergeron
for advertising in his lectures the conjectural interpretation of the
numbers~\eqref{number} in terms of Tamari intervals. We also thank Gwendal Collet and Gilles Schaeffer   for
interesting discussions on this topic.

\bibliographystyle{plain}
\bibliography{tamar.bib}

\spacebreak

\appendix
\medskip
\noindent{\bf Appendix. A constructive approach to Lemma~\ref{lem:id}.} 
In order to prove Lemma~\ref{lem:id}, we had to prove the more general
identity~\eqref{identity}.  This identity was first
guessed by expanding $\Lambda^{(k)}(1/w^m)$ in $w$ and $z$, for
several values of $k$ and $m$. Fortunately, the coefficients in this
expansion turned out to be simple products of binomial coefficients. 

 What if these coefficients had not been so simple? 
A  constructive approach goes as follows. Introduce the  following two
\fps\ in\footnote{The variable $t$ that we use here has nothing to do
  with the variable $t$ that occurs in the \gf\ $F(t;x,y)$ of
  intervals.}  $t$ and $s$, with coefficients in $\qs[w,1/w,z]$:
$$
P(t;s)= \sum_{m\ge 1, k\ge 0} t^k s^{m-1} \Lambda^{(k)}(w^m)
\quad \hbox{and} \quad 
N(t;s)= \sum_{m\ge 0, k\ge 0} t^k s^{m} \Lambda^{(k)}\left(\frac
  1{w^m}\right),
$$
where we still denote $w=1+zu$. Observe that 
$$
P(t;0)= \sum_{ k\ge 0} t^k  \Lambda^{(k)}(w).
$$
We want to compute the coefficient of $t^m s^m$ of $N(t;s)$, since
this coefficient  is $\Lambda^{(m)}(1/w^m)$.

Eq.~\eqref{lambda-elem} yield
functional equations for the series $P$ and $N$. For $P(t;s)$ first,
\begin{eqnarray*}
  P(t;s) &=& \sum_{m\ge 1} s^{m-1} w^m + t 
 \sum_{m\ge 1, k\ge 1} t^{k-1} s^{m-1} \Lambda^{(k-1)}
\left( (z-1) w+z \sum_{a=2}^m w^a +w^{m+1}\right)
\\
&=&\frac w{1-sw} + \frac{t(z-1)}{1-s} P(t;0)+ \frac {tz}{1-s}
\left(P(t;s)-P(t;0)\right) + t\, \frac{P(t;s)-P(t;0)}s.
\end{eqnarray*}
Equivalently,
\beq\label{eq:P}
\left( 1 - \frac{tz}{1-s}-\frac t s\right) P(t;s)= \frac w{1-sw} -
\frac{t P(t;0)}{s(1-s)}
.
\eeq
Now for $N(t;s)$, we have
\begin{eqnarray*}
  N(t;s) &=& \sum_{m\ge 0}  \frac {s^{m}}{w^m} + t 
 \sum_{m\ge 1, k\ge 1} t^{k-1} s^{m} \Lambda^{(k-1)}
\left(\frac{1-z}{w^{m-1}} - z\sum_{a=0}^{m-2} \frac 1{w^a} -w \right)
\\
&=&\frac 1{1-s/w} +ts(1-z)N(t;s)-\frac{tzs^2}{1-s} N(t;s)
-\frac{ts}{1-s} P(t;0) .
\end{eqnarray*}
Equivalently,
\beq\label{eq:N}
\left( 1-ts + \frac{tzs}{1-s}\right) N(t;s)= \frac
1{1-s/w}-\frac{ts}{1-s} P(t;0) .
\eeq
Equation~\eqref{eq:P} can be solved using the kernel method (see
e.g.~\cite{hexacephale,bousquet-petkovsek-1,prodinger}):
let $S\equiv S(t,z)$ be the unique \fps \ in $t$, with coefficients in
$\qs[z]$, having constant term 0 and satisfying 
$$ 1 - \frac{tz}{1-S}-\frac t S=0.$$
That is,
\beq\label{S-def}
S= \frac{1+t-tz-\sqrt{1-2t(1+z)+t^2(1-z)^2}}2.
\eeq
Then setting $s=S$ cancels the left-hand side of~\eqref{eq:P}, giving 
$$
P(t;0)= \frac{wS(1-S)}{t(1-wS)}.
$$
Combined with~\eqref{eq:N}, this yields an explicit expression of
$N(t;s)$:
 $$
N(t;s)= \frac 1{1-ts + \frac{tzs}{1-s}} \left( \frac
1{1-s/w} - \frac{ws S(1-S)}{(1-s)(1-wS)}\right).
$$
We want to extract from this series
the coefficient of $t^m s^m$, and obtain the simple expression
$(1-z)^m-w^m$ predicted by Lemma~\ref{lem:id}. Clearly, the first part of the above 
expression of $N(t;s)$ (with non-positive powers of $w$) contributes $(1-z)^m$,
as expected. For $i\ge 1$, the coefficient of $w^i$ in
 the second part of $N(t;s)$ is
$$
R_i:=- \frac{s S^i(1-S)}{(1-s)\left(1-ts + \frac{tzs}{1-s}\right)}.
$$
Recall that $S$, given by~\eqref{S-def}, depends on $t$ and $z$, but not on $s$. 
Since $S=t+O(t^2)$, the coefficient of $t^m s^m$ in $R_i$  is
zero for $i>m$. When $i=m$, it is easily seen to be $-1$, as expected. 
In order to prove that the coefficient of $t^m s^m$ in $R_i$ is zero
when $0<i<m$, we first perform a partial fraction expansion of $R_i$ in $s$,
using
$$
(1-s)\left(1-ts + \frac{tzs}{1-s}\right)= (1-sS)(1-st/S),
$$
where $S$ is defined by~\eqref{S-def}.
This gives
$$
R_i= -\frac{S^{i+1}(1-S)}{t-S^2}\left( \frac 1{1-ts/S} - \frac 1 {1-sS}\right),
$$
so that
$$
[s^m] R_i= -\frac{S^{i+1-m}(1-S)}{t-S^2}\left(t^m-S^{2m}\right)
= \sum_{j=0}^{m-1} t^{m-1-j} S^{2j+i-m+1}(S-1)
$$
and
\beq\label{smtm}
[s^mt^m] R_i
= \sum_{j=0}^{m-1} [t^{j+1}] S^{2j+i-m+1}(S-1)= \sum_{j=0}^{m-i} [t^{j+1}] S^{2j+i-m+1}(S-1).
\eeq
  The Lagrange inversion gives, for $n\ge 1$ and $k\in \zs$,
$$
[t^n] S^k(S-1)= 
\left\{
\begin{array}{lll}
 0 &&\hbox{if } n<k;
\\
 -1 &&\hbox{if } n=k;
\\
1-kz && \hbox{if } n=k+1;
\\
\displaystyle \frac 1 n \sum_{p=1}^{n-k} z^p {n \choose p} {{n-k-1}\choose {p-1}} \frac{n-p-kp}{n-k-1} && \hbox{otherwise.}
\end{array}
\right.
$$
Returning to~\eqref{smtm}, this gives
$$
[s^mt^m] R_i= -(m-i-1)z + \sum_{j=0}^{m-i-2}\ \sum_{p=1}^{m-i-j}\frac { z^p}{j+1}  {{j+1} \choose p} {{m-i-j-1} \choose {p-1}} \frac{j+1-p(2j+i-m+2)}{m-i-j-1}.
$$
Proving that this is zero boils down to proving, that, for $1\le p \le m-i$,
$$
 \sum_{j=0}^{m-i-2}  \frac 1{j+1} {{j+1} \choose p} {{m-i-j-1} \choose {p-1}} \frac{j+1-p(2j+i-m+2)}{m-i-j-1}= (m-i-1) \chi_{p=1}.
$$
This is  easily proved using Zeilberger's
algorithm~\cite[Chap.~6]{AB}, 
via the {\sc Maple} package  {\sc Ekhad} (command {\tt zeil}),
or directly using the {\sc Maple} command {\tt sum}.

\end{document}